\def\marginpar#1{\ignorespaces}
\newtheorem{thm}{Theorem}
\newtheorem{prop}[thm]{Proposition}
\newtheorem{lem}[thm]{Lemma}
\newtheorem{cor}[thm]{Corollary}
\theoremstyle{definition}
\newtheorem{rmk}[thm]{Remark}
\newtheorem{exa}{Example}
\def\eqlabel#1{\def\@currentlabel{#1}}
\def\formula#1{\def\@tempa{#1}\let\@tempb\theequation\def\theequation{%
\hbox{#1}}\def\@currentlabel{(\theequation)}$$}
\def\endformula{\leqno\hbox{(\@tempa)}$$\@ignoretrue\let\theequation\@tempb}
\def\given{\hskip5\p@\relax\vrule\@width.4\p@\hskip5\p@\relax}
\newcommand{\open}[1]{%
\par\normalfont\topsep6\p@\@plus6\p@\trivlist\item[\hskip\labelsep\itshape#1%
\@addpunct{.}]\ignorespaces}
\DeclareRobustCommand{\close}[1]{%
  \ifmmode % if math mode, assume display: omit penalty etc.
  \else \leavevmode\unskip\penalty9999 \hbox{}\nobreak\hfill
  \fi
  \quad\hbox{$#1$}}
\newlength{\toskip}\settowidth{\toskip}{(\theequation)}
\def\na {\nabla}
\def\dsp{\displaystyle}
\def\bthm{\begin{thm}}
\def\nthm{\end{thm}}
\def\bprop{\begin{prop}}
\def\nprop{\end{prop}}
\def\brmk{\begin{rmk}}
\def\nrmk{\end{rmk}}
\def\bexa{\begin{exa}}
\def\nexa{\end{exa}}
\def\blem{\begin{lem}}
\def\nlem{\end{lem}}
\def\bcor{\begin{cor}}
\def\ncor{\end{cor}}
\newcommand{\beqq}{\begin{equation}}
 \newcommand{\neqq}{\end{equation}}
\newcommand{\bprf}{\begin{proof}}
\newcommand{\nprf}{\end{proof}}
\newcommand{\ee}{\mathbb{E}}
\newcommand{\nn}{\mathbb{N}}
\newcommand{\rr}{\mathbb{R}}
\def\BB{\mathcal B}
\def\EE{\mathcal E}
\def\LL{\mathcal L}
\def\MM{\mathcal M}
\def\vep{\varepsilon}
\def\<{\langle}
\def\>{\rangle}
\newcommand{\var}{\mathop{\rm Var\,}\nolimits}
\newcommand{\ent}{\mathop{\rm Ent\,}\nolimits}
\newcommand{\Hess}{\mathop{\rm Hess\,}\nolimits}
\begin{document}

\date{\today}

\title[Poincar\'e and logarithmic Sobolev for mean fields models]{Uniform Poincar\'e and logarithmic Sobolev inequalities\\
 for mean field particles systems}

\author[A. Guillin]{\textbf{\quad {Arnaud} Guillin $^{\diamondsuit}$ \, \, }}
\address{{\bf Arnaud Guillin}. Laboratoire de Math\'ematiques Blaise Pascal, CNRS-UMR
6620, Universit\'e Clermont-Auvergne (UCA), Campus Universitaire des Cezeaux, 3 Place Vasarely, 63178 Aubi\`ere, France.}
 \email{arnaud.guillin@math.univ-bpclermont.fr}

\author[W. Liu]{\textbf{\quad {Wei} Liu $^{\clubsuit}$ \,  }}
\address{{\bf Wei LIU} School of Mathematics and Statistics, Wuhan University, Wuhan, Hubei 430072, PR China; Computational Science Hubei Key Laboratory, Wuhan University, Wuhan, Hubei 430072, PR China.} \email{wliu.math@whu.edu.cn}

\author[L. Wu]{\textbf{\quad {Liming} Wu $^{\diamondsuit}$ \, \, }}
\address{{\bf Liming Wu}. Laboratoire de Math\'ematiques Blaise Pascal, CNRS-UMR
6620, Universit\'e Clermont-Auvergne (UCA), Campus Universitaire des Cezeaux, 3 Place Vasarely, 63178 Aubi\`ere, France.}
\email{Li-Ming.Wu@math.univ-bpclermont.fr}

\author[C. Zhang]{\textbf{\quad {Chaoen} Zhang $^{\diamondsuit}$ \, \, }}
\address{{\bf {Chaoen} ZHANG}\\  Laboratoire de Math\'ematiques Blaise Pascal, CNRS-UMR
6620, Universit\'e Clermont-Auvergne (UCA), Campus Universitaire des Cezeaux, 3 Place Vasarely, 63178 Aubi\`ere, France.} \email{chaoen.zhang@uca.fr}

\maketitle

 \begin{center}

\textsc{$^{\diamondsuit}$ Universit\'e Clermont-Auvergne}
\smallskip

\textsc{$^{\clubsuit}$ Wuhan University}
\smallskip

\end{center}

\begin{abstract}
In this paper we establish some explicit and sharp estimates of the spectral gap and the log-Sobolev constant for mean field particles system, uniform in the number of particles, when the confinement potential have many local minimums. Our uniform log-Sobolev inequality, based on Zegarlinski's theorem for Gibbs measures, allows us to obtain the exponential convergence in entropy of the McKean-Vlasov equation with an explicit rate constant, generalizing the result of \cite{CMV03} by means of the displacement convexity approach, or \cite{M01,M03} by Bakry-Emery technique or the recent \cite{BGG13} by dissipation of the Wasserstein distance.
\end{abstract}

\bigskip

\textit{ Key words : } Poincar\'e inequality, logarithmic Sobolev inequality, mean field particle models, McKean-Vlasov equation.
\bigskip

\textit{ MSC 2010 : }
%\bigskip

\section{Introduction}
Functional inequalities such as Poincar\'e or logarithmic Sobolev inequalities have nowadays an important impact on various fields of mathematics (probability, PDE, statistics,...) due to their various properties such as convergence to equilibrium (in $L^2$ or in entropy) or concentration of measure (exponential or gaussian). We refer to the beautiful book \cite{BGL-MarkovDiffusion} for an introduction (and more) to the subject as well as bibliographical references. Let us introduce these two inequalities. Let $\mu$ be a probability measure on $\rr^d$, we say that the probability measure $\mu$ satisfies a Poincar\'e (or equivalently spectral gap) inequality with (optimal) constant $\lambda_\mu$ if for all smooth functions $f$ we have
\begin{equation}\label{PI}
(PI)\qquad\qquad \lambda_1(\mu)\, {\rm Var}_\mu(f)\le \int |\nabla f|^2d\mu,
\end{equation}
where ${\rm Var}_\mu(f):=\int f^2d\mu-\left(\int fd\mu\right)^2$ denotes the variance of $f$ wrt $\mu$
and a logarithmic Sobolev inequality with (optimal) constant $\rho_\mu$ if for all smooth functions $f$ we have
\begin{equation}\label{LSI}
(LSI)\qquad\qquad \rho_{LS}(\mu)\, {\rm Ent}_\mu(f^2)\le 2\int |\nabla f|^2d\mu,
\end{equation}
where ${\rm Ent}_\mu(f^2):=\int f^2\log(f^2/\int f^2d\mu)d\mu$ denotes the entropy of $f^2$  with respect to (w.r.t. in
short) $\mu$. A famous condition to verify those inequalities is the Bakry-Emery $\Gamma_2$ criterion which says that if $d\mu=e^{-V}dx$ on $\rr^n$,
$\Hess V\ge\kappa Id>0$, then $\lambda_1(\mu)\ge \rho_{LS}(\mu)\ge \kappa$.\\
One crucial property of these two inequalities is the tensorization (or dimension free), i.e. if $\mu$ satisfies a Poincar\'e or a logarithmic Sobolev inequality then $\mu^{\otimes N}$ satisfies the same inequality with the same constant (and thus independent of $N$) leading for example to (non asymptotic) gaussian deviation inequalities refining central limit inequalities or convergence to equilibrium independent of the number of particles. However interesting physical systems are far from being independent, so that there exists a huge literature devoted to the obtention of functional inequalities such as Poincar\'e or logarithmic Sobolev inequalities, in particular to assess convergence to equilibrium, in various dependent settings such as (discrete or continuous) spin systems \cite{SZ2,SZ1,SZ3,Zegarlinski,Z96,BH99,BH00,Y00,Y01,LedouxLSI,BB19} (see also \cite{GZ03} for a survey) or mean field models \cite{M01,M03,CGM08,EberlePTRF,EGZ16} with a particular emphasis on the dependence on the number of spins or particles.\\
We will focus our attention on mean field particles system. To this end, consider the $N (\ge 2)$ interacting particles system of  mean field type :
\beqq\label{MFS}
dX^{N}_i(t) = \sqrt{2} dB_i(t) -\nabla V(X^{N}_i(t))dt - \frac 1{N-1} \sum_{j\ne i }\nabla_x W(X^{N}_i(t),X^{N}_j(t)) dt, \ i=1,\cdots, N
\neqq
where $B_1(t),\cdots,B_N(t)$ are $N$ independent Brownian motions taking values in $\rr^d$, the confinement potential $V$ is a function on $\rr^d$ of class $C^2$, and the interaction potential $W$ is a function on $\rr^d\times\rr^d$ of class $C^2$.
Its generator $\LL^{(N)}$ is given by
\beqq\label{generators}
\aligned
\LL^{(N)} f(x_1,\cdots,x_N) &= \sum_{i=1}^{ N} \LL^{(N)}_i f(x_1,\cdots,x_N) \\
\LL^{(N)}_i f(x_1,\cdots,x_N)&:=\Delta_i f(x_1,\cdots,x_N) -\nabla_i V(x_i) \cdot \na_i f(x_1,\cdots,x_N)
\\
&\quad \ \  - \frac 1{N-1} \sum_{j\ne i}(\nabla_x W)(x_i, x_j)\cdot \nabla_i  f(x_1,\cdots,x_N)
\endaligned
\neqq
for any smooth function $f$ on $(\rr^d)^N$, where $\nabla_i$ denotes the gradient  w.r.t. $x_i$, $\Delta_i$ the Laplacian w.r.t. $x_i$, and $x\cdot y=\<x,y\>$ denotes the Euclidean inner product.

The unique invariant probability measure of (\ref{MFS}) is
\beqq\label{MFM}
 \mu^{(N)}(dx_1,\cdots,dx_N)=\frac{1}{Z_N} \exp\left\{-H_N(dx_1,\cdots,dx_N)\right\} dx_1\cdots dx_N
\neqq
where
$$
 H_N(x_1,\cdots,x_N):=\sum\limits_{i=1}^{N} V(x_i)+\frac{1}{N-1}\sum\limits_{1\le i<j\le N}W(x_i,x_j)
$$
is the Hamiltonian, and $Z_N$ is the normalization constant called {\it partition function} in statistical mechanics, which is assumed to be finite throughout the paper. Without interaction (i.e. $W=0$ or constant), $\mu^{(N)}=\alpha^{\otimes N}$ (i.e. the particles are independent), where
$$
d\alpha(x)= \frac 1C e^{-V(x)}dx, \ C=\int e^{-V(x)}dx.
$$

Our first major goal is to get uniform (in the number of particles $N$) Poincar\'e or logarithmic Sobolev inequalities for the measure $\mu^{(N)}$ under tractable conditions. Malrieu \cite{M01} used Bakry-Emery's $\Gamma_2$ technique to establish a logarithmic Sobolev inequality for the mean field case thus requiring uniform convexity assumption for $V$ and $W$. Recent techniques such as Lyapunov conditions (see \cite{BCG08,BBCG} for example) are usually inefficient to get dimension-free results. For each of these inequalities we require a uniform bound for the spectral gap or the logarithmic Sobolev constants of the one particle conditional distribution. To bypass the perturbation techniques, our main assumptions for Poincar\'e inequality will be of two sorts: for the confinement potential we will need some linear growth at infinity as well as a lipschitzian spectral gap property (see Section 2 for details) which will be sufficient to get a Poincar\'e inequality for the one particle conditional distribution, and for the interaction potential a lower bound on the ``extra diagonal"  Hessian of $W$, leading to new and sharp results. A particular emphasis will be made on Curie-Weiss model and on interaction potential of the form $W(x,y)=W_0(x-y)$. The proof will repose on some  refinement of the ideas of Ledoux \cite{LedouxLSI}. For the logarithmic Sobolev inequality we will consider a translation of Zegarlinski's condition (see \cite{Zegarlinski}) for mean field model which relies on the smallness of the product of the Lipschitzian spectral gap and of the infinite norm of the Hessian of the interaction potential.\\
One of our interest to consider logarithmic Sobolev inequality for mean field particles system is to get an exponential entropic decay for the limit non-linear McKean-Vlasov equation.
Indeed, consider the non-linear McKean-Vlasov equation with an internal potential $V:\rr^d\to\rr$ and an interaction
potential $W : \rr^d\times\rr^d\to\rr$  (between two particles) so that $W(x,y)=W(y,x)$\-:
\beqq\label{McV}
\partial_t \nu_t =\Delta \nu_t +\nabla\cdot( \nu_t \nabla V ) + \nabla\cdot( \nu_t  \nabla (W\circledast \nu_t) )
\neqq
where $(\nu_t)_{t\ge0}$ is a flow of probability measures on $\rr^d$ with $\nu_0$ given, $\nabla$ is the gradient,  $\nabla\cdot$ is the divergence, and
\beqq\label{McVa}
(W\circledast \nu) (x) = \int_{\rr^d} W(x,y)d\nu(y).
\neqq
It corresponds to the self-interacting diffusion
\beqq\label{McVd}
dX_t = \sqrt{2} dB_t - \nabla V(X_t)dt -  \nabla W\circledast \nu_t (X_t)dt
\neqq
where $\nu_t$ is the law of $X_t$. It can be seen through the propagation of chaos phenomenon (see \cite{S91} for example) that the law of $X^N_1(t)$ converges to the one of $X_t$ as the number of particles $N$ tends to infinity (for each $t>0$). Via the logarithmic Sobolev inequality for the mean field particles system and a quite technical passage to the limit, we will be able to prove entropic convergence to equilibrium for the non-linear McKean-Vlasov SDE generalizing results of \cite{CMV03,BGG13}.\\
Let us finish this introduction by the plan of the paper. In the next section, we will present our set of assumptions and the main results of the paper concerning uniform Poincar\'e or logarithmic Sobolev inequality of mean field particles system as well as exponential convergence to equilibrium for McKean-Vlasov SDE (\ref{McVd}). Section 3 presents the Lipschitzian spectral gap for conditional distribution needed in the proof of the uniform Poincar\'e inequality detailed in Section 4. The translation of Zegarlinski's condition and thus the proof of uniform logarithmic Sobolev inequality are the core of Section 5. The exponential convergence to equilibrium of McKean-Vlasov SDE is finally detailed in the last Section 6.

\section{Main results}
\subsection{Framework and main assumptions.} Throughout the paper we work in the following framework.

\begin{enumerate}

\item[(H1)] The confinement potential $V : \rr^d\to \rr$ is $C^2$-smooth, its Hessian ${\rm Hess }(V)=\na^2 V=(\partial_{x_k}\partial_{x_l} V)_{1\le k,l\le d}$ of $V$ is bounded from below and there are two positive constants $c_1,c_2$ such that
\beqq\label{SHJT1}x\cdot\nabla V(x) \ge c_1|x|^2 -c_2, \ x\in \rr^d.\neqq

\item[(H2)] The pairwise interaction potential $W : \rr^d\times \rr^d\to \rr$ is $C^2$-smooth such that its Hessian $\nabla^2 W$ is bounded and
$$
  \iint \exp\left( - [V(x) + V(y) + \lambda W(x,y)]\right) dx dy<+\infty, \ \forall \lambda>0.
$$
\item[(H3)] {\bf (Lipschitzian spectral gap condition for one particle)} the following Lipschitzian constant (for the marginal conditional distribution of one particle) is finite
\beqq\label{thm1a}
c_{Lip, m} := \frac{1}{4}\int_0^{\infty}\exp\left\{\frac{1}{4}\int_0^sb_0(u){\rm d} u\right\}s{\rm d} s <+\infty
\neqq
where $b_0(r)$ is the dissipativity rate of the drift of one particle in the system (\ref{MFS}) at distance $r>0$ :
\beqq\label{21a}
 b_0(r)=\sup\limits_{x,y,z\in\rr^d : |x-y|=r} - \langle \frac{x-y}{|x-y|}, (\nabla V(x)-\nabla V(y)) +(\nabla_x W(x,z)- \nabla_x W(y,z))\rangle.
\neqq
\end{enumerate}
This last condition, taken from \cite{Wu09jfa}, is of course reminiscent of the work of Eberle \cite{EberlePTRF,EGZ16} without the interaction potential for convergence to equilibrium in $L^1$-Wasserstein distance. However in their work the interaction potential is seen only as a perturbation.

\subsection{Uniform Poincar\'e inequality for mean-field $\mu^{(N)}$}

In the sequel we shall use the notation $\nabla_{x_i,x_j}^2 H$  for a $C^2$-function $H$ on $(\rr^d)^N$, defined by
$$\nabla_{x_i,x_j}^2 H:=(\partial_{x_{ik}x_{jl}}^2H)_{1\leq k,l\leq d}$$
where $x_i=(x_{i1},x_{i2},\cdots, x_{id})\in \rr^d$. Let
\beqq\label{BJ21}\lambda_{1,m}=\inf_{N\ge 2}\inf_{1\le i\le N} \lambda_1(\mu_i)\neqq
where $\lambda_1(\mu_i)$ is the spectral gap of the conditional distribution $\mu_i=\mu_i(dx_i|x^{\hat i})$ of $x_i$ knowing $x^{\hat i}=(x_j)_{j\ne i}$, i.e. the best constant such that the following Poincar\'e inequality
$$
\lambda_1(\mu_i) {\rm Var}_{\mu_i}(f) \le \int_{\rr^d} |\na_i f|^2 d\mu_i, \ \forall f\in C^1_b(\rr^d)
$$
holds.
\begin{thm}\label{thm1} In the framework described above, we have always
\beqq\label{BJ22}
\lambda_{1,m} \ge \frac 1{c_{Lip,m}}.
\neqq
Assume that there is some constant $ h>-\lambda_{1,m}$ such that  for any $(x_1,\cdots,x_N)\in (\rr^d)^N$,
\beqq\label{thm1b}
\frac 1{N-1} (1_{i\neq j}\nabla^2_{x,y} W(x_i,x_j) )_{1\le i, j\le N}\ge h I_{dN}
\neqq
in the order of definite nonnegativity for symmetric matrices, where $I_n$ is the identity matrix of taille $n$.
Then $\mu^{(N)}$ satisfies the following Poincar\'e inequality
\beqq\label{thm1c}
\left( \lambda_{1,m} +h \right)\var_{\mu^{(N)}} (f) \le \int_{(\rr^d)^N} |\nabla f|^2 d\mu^{(N)}, \ f\in C^1_b(\rr^{dN})
\neqq
or equivalently the spectral gap $\lambda_1(\mu^{(N)})$  of $\LL^{(N)}$ on $L^2(\mu^{(N)})$, defined as the infimum of those spectral points $\lambda>0$ of $\LL^{(N)}$ on $L^2(\mu^{(N)})$,  verifies
\beqq\label{thm1d}
\lambda_1(\mu^{(N)}) \ge \lambda_{1,m}+h\ge \frac 1{c_{Lip,m}} +h.
\neqq
\end{thm}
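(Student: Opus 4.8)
The plan is to prove the two assertions in turn. For the bound $\lambda_{1,m}\ge 1/c_{Lip,m}$, fix $N\ge 2$, an index $i$ and a configuration $x^{\hat i}=(x_j)_{j\ne i}$, and observe that the conditional law $\mu_i(dx_i\,|\,x^{\hat i})$ is a one-particle Gibbs measure, $\propto e^{-\Phi_i(x_i)}dx_i$ with
$$\Phi_i(x)=V(x)+\frac1{N-1}\sum_{j\ne i}W(x,x_j),$$
so that its drift $-\nabla\Phi_i$ has, at distance $r>0$, a dissipativity rate at most $b_0(r)$: indeed $-\<\tfrac{x-y}{|x-y|},\,\nabla\Phi_i(x)-\nabla\Phi_i(y)\>$ is the average over $j\ne i$ of the expressions in \eqref{21a}, each $\le b_0(r)$. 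Since $b_0$ enters \eqref{thm1a} monotonically, the conditional one-particle diffusion $\LL^{(N)}_i$ has Lipschitzian spectral gap constant $\le c_{Lip,m}$, whence $\lambda_1(\mu_i)\ge 1/c_{Lip,m}$ by the Lipschitzian spectral gap estimate of Section~3 (following \cite{Wu09jfa}); taking the infimum over $i$ and $N$ gives \eqref{BJ22}. In particular $\lambda_{1,m}>0$ under (H3), and $\lambda_{1,m}+h>0$ since $h>-\lambda_{1,m}$.

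For the Poincar\'e inequality \eqref{thm1c} I would run the $\Gamma_2$-semigroup argument, using the conditional spectral gap where one usually uses convexity of the global potential. Let $(P_t)_{t\ge0}$ be the semigroup of $\LL^{(N)}$ and, for $f\in C^1_b(\rr^{dN})$, set $I(t):=\int|\nabla P_t f|^2 d\mu^{(N)}$. The integrated Bochner identity for the reversible diffusion $\LL^{(N)}$ gives
$$-\tfrac12\,\tfrac{d}{dt}I(t)=\int\Gamma_2(P_t f)\,d\mu^{(N)}=\int\Big(\|\nabla^2 P_t f\|_{\rm HS}^2+\<\nabla P_t f,\ \nabla^2 H_N\cdot\nabla P_t f\>\Big)d\mu^{(N)},$$
where $\nabla^2 H_N$ is the full $dN\times dN$ Hessian of the Hamiltonian. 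Split $\nabla^2 H_N=D+R$ into its block-diagonal part $D=\mathrm{diag}\big(\nabla^2_{x_i}H_N\big)_{1\le i\le N}$ and its off-diagonal part $R=\frac1{N-1}\big(1_{i\ne j}\nabla^2_{x,y}W(x_i,x_j)\big)_{1\le i,j\le N}$; assumption \eqref{thm1b} is exactly $R\ge hI_{dN}$, hence $\int\<\nabla P_t f,\,R\nabla P_t f\>d\mu^{(N)}\ge h\,I(t)$.

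The heart of the matter is the diagonal estimate, for $u=P_t f$,
$$\int\Big(\|\nabla^2 u\|_{\rm HS}^2+\<\nabla u,\,D\nabla u\>\Big)d\mu^{(N)}\ \ge\ \lambda_{1,m}\int|\nabla u|^2 d\mu^{(N)}.$$
Discarding the off-diagonal blocks of $\nabla^2 u$ gives $\|\nabla^2 u\|_{\rm HS}^2\ge\sum_i\|\nabla^2_{x_i}u\|_{\rm HS}^2$, and since the block $\nabla^2_{x_i}H_N$ is, as a function of $x_i$, the Hessian of $\Phi_i$, conditioning on $x^{\hat i}$ and applying the integrated Bochner identity to the one-particle generator $\LL^{(N)}_i=\Delta_i-\nabla_i\Phi_i\cdot\nabla_i$ (reversible for $\mu_i(\cdot|x^{\hat i})$) identifies $\int\big(\|\nabla^2_{x_i}u\|_{\rm HS}^2+\<\nabla_i u,\nabla^2_{x_i}H_N\nabla_i u\>\big)\,d\mu^{(N)}$ with $\int(\LL^{(N)}_i u)^2 d\mu^{(N)}$. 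The elementary spectral fact that a spectral gap $\lambda_1(\mu_i)$ of $-\LL^{(N)}_i$ on $L^2(\mu_i(\cdot|x^{\hat i}))$ yields $\int(\LL^{(N)}_i v)^2\,d\mu_i\ge\lambda_1(\mu_i)\int|\nabla v|^2 d\mu_i$, together with $\lambda_1(\mu_i)\ge\lambda_{1,m}$ for every $x^{\hat i}$, then gives $\int(\LL^{(N)}_i u)^2 d\mu^{(N)}\ge\lambda_{1,m}\int|\nabla_i u|^2 d\mu^{(N)}$; summing over $i$ proves the diagonal estimate. Combining the three bounds yields $-\tfrac12\tfrac{d}{dt}I(t)\ge(\lambda_{1,m}+h)I(t)$, so $I(t)\le e^{-2(\lambda_{1,m}+h)t}I(0)$; integrating $-\tfrac{d}{dt}\var_{\mu^{(N)}}(P_t f)=2I(t)$ over $[0,\infty)$ and using $P_t f\to\mu^{(N)}(f)$ in $L^2$ (uniqueness of the invariant measure) gives $\var_{\mu^{(N)}}(f)\le\frac1{\lambda_{1,m}+h}\int|\nabla f|^2 d\mu^{(N)}$, i.e. \eqref{thm1c}; \eqref{thm1d} then follows together with \eqref{BJ22}.

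The main obstacle is not the algebra above but its rigorous justification: one must verify that $P_t f$ is smooth with enough decay that the two integrations by parts behind the Bochner identities (global, for $\LL^{(N)}$, and conditional, for $\LL^{(N)}_i$) are legitimate, that $\Gamma_2(P_t f)\in L^1(\mu^{(N)})$, and that the boundary terms at infinity vanish. The linear growth \eqref{SHJT1} in (H1), together with the boundedness of $\nabla^2 V$ and $\nabla^2 W$ from (H1)--(H2), supply the non-explosion, moment and regularization estimates needed; a safe route is to establish \eqref{thm1c} first for smooth compactly supported (or suitably exponentially integrable) $f$ and then pass to the limit for general $f\in C^1_b$. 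One also needs $\Phi_i\in C^2$ with the appropriate integrability for the conditional Bochner identity, which is immediate from (H1)--(H2).
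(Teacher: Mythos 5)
Your proof is correct and follows essentially the same route as the paper's. The first assertion is proved exactly as in Section~3 (the Lipschitzian spectral gap estimate of Lemma~\ref{lem-Lip} applied to the conditional generator $\LL^{(N)}_i$, using the dissipativity bound $b_0$ from (H3)), and the Poincar\'e inequality is obtained from the same $\Gamma_2$ decomposition $\nabla^2 H_N=D+R$, with the diagonal part controlled by the conditional Bochner identity plus the conditional spectral gap, and the off-diagonal part controlled by assumption~\eqref{thm1b} --- this is precisely Proposition~\ref{Prop:Ledoux_(PI if marginal PI)} in the paper. The only stylistic difference: you run the semigroup decay $I(t)\le e^{-2(\lambda_{1,m}+h)t}I(0)$ and then integrate, whereas the paper invokes the dual (static) characterization of the Poincar\'e inequality, $\int(\LL f)^2\,d\mu\ge\lambda\int|\nabla f|^2\,d\mu$, directly (\cite[Prop.~4.8.3]{BGL-MarkovDiffusion}); the latter sidesteps the ergodicity and regularization-of-$P_t f$ issues you flag at the end, which is why the paper's version is a bit shorter, but the underlying $\Gamma_2$ estimate is identical.
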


Its proof will be given in \S 3.

The uniform Poincar\'e inequality in Theorem \ref{thm1} gives us the following explicit correlation inequality.  For any $C^1$-function $f$ on $\rr^d$, denote $\|f\|^2_{\rm Lip}$ by its Lipschitzian norm w.r.t. the Euclidean metric on $\rr^d$.

\bcor\label{thm1-cor1} Under the conditions of Theorem \ref{thm1}, for any two bounded Lipschitzian functions $f,g$ on $\rr^d$ and $i\ne j$
\beqq\label{thm1-cor1a}
{\rm Cov}_{\mu^{(N)}} (f(x_i), g(x_j)) \le   \frac {c_{\rm Lip,m}}{(1+c_{\rm Lip,m}h)(N-1)} \left(\|f\|^2_{\rm Lip} + \|g\|^2_{\rm Lip}\right)
\neqq
where ${\rm Cov}_{\mu^{(N)}}(\cdot,\cdot)$ denotes the covariance of two functions under the probability measure $\mu^{(N)}$.  Roughly speaking, two particles $x_i$ and $x_j$ become asymptotically independent at the rate $1/N$.
\ncor

\bprf The l.h.s of (\ref{thm1-cor1a}) does not depend on $(i,j)$.
Applying the Poincar\'e inequality  to $F:=\frac 1{\sqrt{N}} \sum_{i=1}^N f(x_i)$, we have
$$\aligned
{\rm Var}_{\mu^{(N)}}(F) &= {\rm Var}_{\mu^{(N)}}(f(x_1)) + (N-1) {\rm Cov}_{\mu^{(N)}} (f(x_1), f(x_2))\\
 &\le \frac {1}{\lambda_1(\mu^{(N)})} \int |\na F|^2 d\mu^{(N)} \le \frac {1}{\lambda_1(\mu^{(N)})}\|f\|^2_{\rm Lip},
\endaligned$$
and therefore $${\rm Cov}_{\mu^{(N)}} (f(x_1), f(x_2))\le \frac{1}{(N-1)\lambda_1(\mu^{(N)})}\|f\|^2_{\rm Lip}.$$

On the other hand, by the first equality above
$$\aligned
{\rm Cov}_{\mu^{(N)}} (f(x_1), f(x_2))&=\frac{1}{N-1}\left( {\rm Var}_{\mu^{(N)}}(F)-{\rm Var}_{\mu^{(N)}}(f(x_1))\right)\\
 &\ge-\frac{1}{N-1}{\rm Var}_{\mu^{(N)}}(f(x_1))\ge  -\frac{1}{(N-1)\lambda_1(\mu^{(N)})}\|f\|^2_{\rm Lip}.
\endaligned$$
Hence we get
\beqq\label{thm1-cor1ab}
|{\rm Cov}_{\mu^{(N)}} (f(x_1), f(x_2))|\le \frac{1}{(N-1)\lambda_1(\mu^{(N)})}\|f\|^2_{\rm Lip} \le \frac {c_{\rm Lip,m}}{(1+c_{\rm Lip,m}h)(N-1)}\|f\|^2_{\rm Lip},
\neqq
where the last inequality follows by (\ref{thm1d}).

Using \eqref{thm1-cor1ab}, we obtain

$$\aligned
{\rm Cov}_{\mu^{(N)}} (f(x_1), g(x_2))&=\frac 14 \left[{\rm Cov}_{\mu^{(N)}} ((f+g)(x_1), (f+g)(x_2))- {\rm Cov}_{\mu^{(N)}} ((f-g)(x_1), (f-g)(x_2))\right]\\
&\le  \frac {c_{\rm Lip,m}}{4(1+c_{\rm Lip,m}h)(N-1)}\left(\|f+g\|^2_{\rm Lip} + \|f-g\|^2_{\rm Lip} \right) \\
&\le  \frac {c_{\rm Lip,m}}{(1+c_{\rm Lip,m}h)(N-1)} \left(\|f\|^2_{\rm Lip} + \|g\|^2_{\rm Lip}\right)
\endaligned$$
the desired (\ref{thm1-cor1a}).
\nprf

\brmk{\rm The Poincar\'e inequality (\ref{thm1c}) is sharp. In fact, let $d=1$, $V(x)=x^2/2$, $W(x,y)=\beta xy$. In that case $b_0(r)=-r$ (such $W$ does not change $b_0$),  $1/c_{Lip,m}=1=\lambda_{1,m}$. Note that $\displaystyle \lambda_0:=\min\left\{1+\beta, \ 1-\frac{\beta}{N-1}\right\}$ is the smallest eigenvalue of the symmetric matrix
\[
\frac 1{N-1} (\beta 1_{i\ne j}) + I_N=\frac 1{N-1} (\beta 1_{i\ne j}) + \lambda_{1,m} I_N.
\]
Our condition (\ref{thm1b}) for the Poincar\'e inequality becomes
$$
 \lambda_0 >0.
$$
This is necessary even for well defining $\mu^{(N)}$. And our estimate (\ref{thm1d}) says that $\lambda_1(\mu^{(N)})\ge \lambda_0$.
As the matrix of the l.h.s. above is exactly the inverse of the covariance matrix of the centered gaussian distribution $\mu^{(N)}$, its spectral gap is exactly $\lambda_0$, showing so the sharpness of this theorem.
}\nrmk

\brmk\label{thm1-rem1}
Here we give an explicit estimate of $c_{Lip,m}$ under the following assumptions. Assume there are some constants $c_V,c_1,c_W,c_2\in\rr$ and $R\ge0$ such that
  \beqq\label{DCV}\langle \nabla V(x)-\nabla V(y),x-y\rangle \geq c_V|x-y|^2 -c_1|x-y|1_{[|x-y|\leq R]}\neqq
  \beqq\label{DCW}\langle \nabla_x W(x,z)-\nabla_x W(y,z),x-y\rangle \geq c_W|x-y|^2 -c_2 |x-y|1_{[|x-y|\leq R]};\neqq
 for all $x,y\in\rr^d$, and $c_V+c_W > 0$,
then we have for any $r>0$,
\begin{eqnarray*}
  b_0(r)&=&\sup\limits_{|x-y|=r,z}\langle \frac{x-y}{|x-y|},-[(\nabla V(x)-\nabla V(y)) + (\nabla_x W(x,z)-\nabla_xW(y,z))]\rangle \\
   &\leq&  -(c_V+c_W)r +(c_1+c_2)1_{[r\leq R]}
\end{eqnarray*}
which implies that
\begin{eqnarray*}
c_{Lip,m}&\leq& \frac{1}{4}\int_0^{\infty}\exp\left\{\frac{1}{4}\int_0^s [-(c_V+c_W)u+(c_1+c_2)1_{[0, R]}(u)] {\rm d} u\right\}s{\rm d} s \\
   &\leq& \frac{1}{4}\int_0^{\infty} \exp\left\{ -\frac{1}{8}(c_V+c_W)s^2+ \frac 14(c_1+c_2)R]\right\} s{\rm d} s \\
   &=&  \frac{1}{c_V+c_W}\exp\left(\frac{1}{4}(c_1+c_2)R\right).
\end{eqnarray*}
\nrmk

\bexa\label{Curie-Weiss} {\bf (Curie-Weiss model)} Let $d=1$, $V(x)=\beta(x^4/4 -x^2/2)$, $W(x,y)=-\beta K xy$ where $\beta>0$ is the inverse temperature, $K\in \rr^*$. This model is ferromagnetic or anti-ferromagnetic according to $K>0$ or $K<0$.

For this example,  we find by elementary analysis
$$
b_0(r) = -2 V'(r/2)= -2\beta (r^3/8 - r/2), \ r>0.
$$
then
\begin{eqnarray*}
  c_{Lip,m} &=& \frac{1}{4}\int_0^{\infty}\exp\left\{\frac{\beta}{4}\int_0^s(r-\frac{r^3}{4}){\rm d} r\right\}s{\rm d} s \\
   &=& \frac{1}{4}\int_0^{\infty}\exp\left\{\frac{\beta}{4}(\frac{s^2}{2}-\frac{s^4}{16})\right\}s{\rm d} s  \\
   &=& e^{\beta/4}\int_{0}^{\infty}e^{-\beta (1/2-u)^2}{\rm d} u \le  \frac {\sqrt \pi} {\sqrt \beta} e^{\beta/4}
\end{eqnarray*}
Let $\lambda(\beta)=\frac 1{c_{Lip,m}}$. By Theorem \ref{thm1}, if there exists $h>-\lambda(\beta)$ such that
\[
-\frac{\beta K}{N-1}(1_{i\neq j})\geq hI_N
\]
then $\lambda_1(\mu^{(N)})\ge h+ \lambda(\beta)$. Note that $(1_{i\neq j})$ has two eigenvalues, $N-1$ and $-1$. Hence
\[
-\frac{\beta K}{N-1}(1_{i\neq j})\geq \begin{cases} \frac{\beta K}{N-1}I_N, \ &\text{ if } K<0,\\
-\beta K I_N,\ &\text{ if }  K>0.
\end{cases}
\]
So taking
\[
h=\begin{cases} \frac{\beta K}{N-1}, \ &\text{ if } K<0,\\
-\beta K,\ &\text{ if }  K>0
\end{cases}
\]
we get by Theorem \ref{thm1},
\beqq\label{exa1b}
\lambda_1(\mu^{(N)})\ge \begin{cases}
 \frac {\sqrt \beta} {\sqrt \pi} e^{-\beta/4}+\frac{\beta K}{N-1}, \ &\text{ if } K<0,\\
 \frac {\sqrt \beta} {\sqrt \pi} e^{-\beta/4} -\beta K,\ &\text{ if }  K>0.
\end{cases}
\neqq
\nexa
(It holds automatically if the right hand side above is $\le 0$.)

In particular in the anti-ferromagnetic case (i.e. $K<0$), for any $\vep>0$ small enough, $\lambda_1(\mu^{(N)})\ge \pi^{-1/2}\beta^{1/2}e^{-\beta/4}-\vep >0$ when the number $N$ of particles is big enough: the mean field should have no phase transition.

\begin{cor}\label{thm1-cor2} Assume that $W(x,y)=W_0(x-y)$ where $W_0 :\rr^d\to\rr$ is $C^2$, even. If

\begin{enumerate}
\item
 $\nabla V$ is dissipative at infinity in the sense of (\ref{DCV}), and

\item The Hessian matrix ${\rm Hess} W_0$ of $W_0$ is bounded from below and from above:
\beqq\label{thm1-cor2a}
c_W I_d \le {\rm Hess} W_0\le C_W I_d
\neqq
and $c_W+c_V>0$.
\end{enumerate}
Then
for all $N\ge 2$,
\beqq\label{thm1-cor2b}
\lambda_1(\mu^{(N)})\ge \lambda_{1,m} -\frac N{N-1}c^-_W - C_W
\neqq
where $c^-_W$ stands for the negative part of $c_W$.
\end{cor}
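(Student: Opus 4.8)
The plan is to deduce the corollary from Theorem \ref{thm1}: it suffices to check that the standing framework (H1)--(H3) holds and to produce the correct constant $h$ in the matrix condition (\ref{thm1b}). For (H3), specializing $W(x,y)=W_0(x-y)$ and writing $a=x-z$, $b=y-z$, the lower Hessian bound $\Hess W_0\ge c_W I_d$ from (\ref{thm1-cor2a}) gives exactly the dissipativity (\ref{DCW}) with constant $c_W$ and $c_2=0$; combined with (\ref{DCV}) and $c_V+c_W>0$, Remark \ref{thm1-rem1} yields $c_{Lip,m}\le (c_V+c_W)^{-1}e^{c_1R/4}<\infty$, so that $\lambda_{1,m}\ge 1/c_{Lip,m}>0$ is well defined. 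The growth condition (\ref{SHJT1}) and the integrability in (H2) follow from the dissipativity of $\nabla V$ and the at most quadratic growth of $W_0$ forced by $\nabla W_0(0)=0$ (evenness) and $\Hess W_0\le C_W I_d$, via a routine Young-inequality argument; boundedness of $\nabla^2 W$ is immediate from (\ref{thm1-cor2a}).

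The core is to show that the symmetric matrix $A:=\frac1{N-1}\big(1_{i\ne j}\nabla^2_{x,y}W(x_i,x_j)\big)_{1\le i,j\le N}$ appearing in (\ref{thm1b}) satisfies $A\ge h\,I_{dN}$ with $h:=-\tfrac{N}{N-1}c_W^--C_W$. Since $W(x,y)=W_0(x-y)$ one has $\nabla^2_{x,y}W(x_i,x_j)=-\Hess W_0(x_i-x_j)$, so $A$ is the off-diagonal part of the full Hessian $B$ of the interaction energy $\frac1{N-1}\sum_{i<j}W_0(x_i-x_j)$: precisely $A=B-D$, where $D$ is block-diagonal with $D_{ii}=\frac1{N-1}\sum_{m\ne i}\Hess W_0(x_i-x_m)$. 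Using that the Hessian of $g(x_i-x_j)$ contributes $\langle \Hess g\,(v_i-v_j),v_i-v_j\rangle$ to a quadratic form,
\[
\langle Bv,v\rangle=\frac1{N-1}\sum_{i<j}\langle \Hess W_0(x_i-x_j)(v_i-v_j),v_i-v_j\rangle\ \ge\ \frac{c_W}{N-1}\sum_{i<j}|v_i-v_j|^2,
\]
and since $\sum_{i<j}|v_i-v_j|^2=N|v|^2-\big|\sum_i v_i\big|^2\in[0,N|v|^2]$ (with $|v|^2=\sum_i|v_i|^2$) this gives $\langle Bv,v\rangle\ge -\tfrac{N}{N-1}c_W^-|v|^2$, regardless of the sign of $c_W$. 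On the other hand $\Hess W_0\le C_W I_d$ gives $\langle Dv,v\rangle=\frac1{N-1}\sum_i\sum_{m\ne i}\langle\Hess W_0(x_i-x_m)v_i,v_i\rangle\le C_W|v|^2$. Subtracting, $\langle Av,v\rangle\ge h|v|^2$ as claimed.

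Finally, if $h>-\lambda_{1,m}$ then Theorem \ref{thm1} applies with this $h$ and gives $\lambda_1(\mu^{(N)})\ge \lambda_{1,m}+h=\lambda_{1,m}-\tfrac{N}{N-1}c_W^--C_W$, which is (\ref{thm1-cor2b}); if $h\le-\lambda_{1,m}$, the right-hand side of (\ref{thm1-cor2b}) is nonpositive and the inequality is trivial, a spectral gap being always $\ge 0$. The step I expect to require the most care is the sharp matrix bound in the middle paragraph: a direct estimate of the cross terms $\langle\Hess W_0(x_i-x_j)v_i,v_j\rangle$, or bounding $|v_i+v_j|^2$ after polarizing, introduces a spurious factor $2$ and destroys the precise constants $\tfrac{N}{N-1}c_W^-$ and $C_W$. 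The clean route is the splitting $A=B-D$ together with the $(v_i-v_j)$-identity for $B$ and the crude diagonal bound for $D$, throughout which one must track signs carefully since neither $c_W$ nor $C_W$ is assumed positive, so that $\langle\Hess W_0\,u,u\rangle\ge c_W|u|^2$ and $\le C_W|u|^2$ are used as a two-sided pinching without any positivity.
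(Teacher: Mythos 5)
Your proof is correct and follows essentially the same route as the paper's: you split the off-diagonal Hessian of the interaction as $A=B-D$ (full Hessian of $U$ minus its block diagonal), bound $\langle Bv,v\rangle$ below via the pairwise identity $\langle Bv,v\rangle=\frac{1}{N-1}\sum_{i<j}\langle\Hess W_0(x_i-x_j)(v_i-v_j),v_i-v_j\rangle$ together with $\sum_{i<j}|v_i-v_j|^2\le N|v|^2$, and bound $D$ above by $C_W I_{dN}$. The paper reaches the identical pairwise expression by a slightly different algebraic path (exploiting the row-sum-zero structure of the matrix $(A_{ij})$ and symmetrizing), but the decomposition, the two-sided use of the Hessian pinching, and the resulting constant $h=-\tfrac{N}{N-1}c_W^- -C_W$ are the same.
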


\brmk\label{BE}{\rm Let us see what the Bakry-Emery $\Gamma_2$-criterion yields. If $\na^2W_0\ge c_W I_d$ and $\na^2V\ge c_V I_d$, by following the proof of the corollary above, we have $\na^2 H \ge (c_V -\frac N{N-1}c_W^-)I_{dN}$. Thus by the Bakry-Emery $\Gamma_2$-criterion,
$$
\lambda_1(\mu^{(N)})\ge \rho_{LS}(\mu^{(N)}) \ge c_V -\frac N{N-1}c_W^-
$$
where $\rho_{LS}(\mu^{(N)})$ is the log-Sobolev constant, given in the next subsection.
}\nrmk

\brmk{\rm
We notice that if $V$ is super-convex at infinity (i.e. the minimal eigenvalue of $\na^2V(x)$ tends to $+\infty$ when $|x|\to\infty$), then $c_V$ can be taken arbitrarily large, so the condition $c_W+c_V>0$ on the lower  bound $c_W$  of $\Hess W_0$ is always satisfied. In particular, if $W_0(x)=\frac{c_W}{2}|x|^2$ with $c_W<0$ (then concave and $C_W=c_W$), the uniform Poincar\'e inequality will hold for all big $N$ by (\ref{thm1-cor2b}) since, in this case,
\[
\lambda_{1,m} -\frac N{N-1}c^-_W - C_W = \lambda_{1,m} +\frac{1}{N-1}c_W.
\]
This phenomenon, apparently strange, can be intuitively explained as follows. The confinement potential, being super-convex, pushes strongly all particles towards some bounded domain; and the interaction potential $W_0$, being concave, pushes every particle far away from others. This creates an equilibrium: the meaning of our spectral gap estimate (\ref{thm1-cor2b}) for the concave potential $W_0$.

}\nrmk

We now present an example for which some much better estimates (than those in Corollary \ref{thm1-cor2}) can be obtained.

\bexa\label{exa2}
Let $W(x,y) = W_0(x-y)$ where
\[
W_0(x)=\int_{\rr^d} e^{-\sqrt{-1}\<x, y\>}{\rm d}\nu(y)+ \frac c 2|x|^2
\]
where $\nu$ is some bounded symmetric (i.e. $\nu(-A)=\nu(A)$ for any Borel subset $A$ of $\rr^d$) positive measure on $\rr^d$ with finite second moment.
Let $\Gamma_\nu=(\int y_ky_l d\nu(y))_{1\le k,l\le d}$ be the
covariance matrix of $\nu$, and $\lambda_{\rm max}(\Gamma_\nu)$ (resp. $\lambda_{\rm min}(\Gamma_\nu)$)  its maximal (resp. minimal) eigenvalue.

In \S 4, we will show the following better result :
\beqq\label{exa2a}
\lambda_1(\mu^{(N)})\ge \lambda_{1,m} + \frac 1{N-1} \left(\min\{c, -c(N-1)\} -\lambda_{\rm max}(\Gamma_\nu)\right).
\neqq
If $c\le 0$ (then the interaction potential is concave), this implies that the spectral gap of $\mu^{(N)}$ is always uniformly lower bounded.
\nexa

%remark:
%In the second example, the potential satisfies the condition in the first one with

\subsection{Uniform log-Sobolev inequality for the mean field $\mu^{(N)}$}
Recall that some nonnegative function $f\in L\log L(\mu)$, its entropy w.r.t. the probability measure $\mu$ is defined by
$$
\ent_\mu(f):= \int f\log f d\mu - \mu(f) \log \mu(f), \ \mu(f):=\int fd\mu.
$$

\begin{thm}\label{thm2} Assume that

\begin{enumerate}

\item for some best constant $\rho_{\rm LS,m}>0$, the conditional marginal distributions $\mu_i:=\mu_i(dx_i|x^{\hat i})$ on $\rr^d$ satisfy the log-Sobolev inequality :
    \begin{equation}\label{thm2-a1}
  \rho_{\rm LS,m}\ent_{\mu_i}(f^2)\le 2 \int |\nabla f|^2 {\rm d}\mu_i, \ f\in C^1_b(\rr^d)
\end{equation}
for all $i$ and $x^{\hat i}$ ;

\item (a translation of Zegarlinski's condition)
\beqq\label{conditionZ}
\gamma_0=c_{Lip,m}  \sup_{x,y\in\rr^d,|z|=1}|\nabla_{x,y}^2W(x,y)z|<1.
\neqq
\end{enumerate}
then $\mu^{(N)}$ satisfies
\[
\rho_{\rm LS,m}(1-\gamma_0)^{2} \ent_{\mu^{(N)}} (f^2) \le 2 \int_{(\rr^d)^N} |\nabla f|^2 d\mu^{(N)},\ f\in C^1_b((\rr^d)^N)
\]
i.e. the log-Sobolev constant of $\mu^{(N)}$ verifies
\beqq\label{BJ3}
\rho_{\rm LS}(\mu^{(N)})\ge\rho_{\rm LS,m}(1-\gamma_0)^{2}.
\neqq
\end{thm}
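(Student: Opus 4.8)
The plan is to reduce (\ref{BJ3}) to an \emph{approximate tensorization of entropy}. Concretely, I would first prove that under the two hypotheses,
\[
\ent_{\mu^{(N)}}(f^2)\ \le\ \frac{1}{(1-\gamma_0)^{2}}\sum_{i=1}^{N}\int \ent_{\mu_i}(f^2)\,d\mu^{(N)},\qquad f\in C^1_b((\rr^d)^N),
\]
where $\ent_{\mu_i}(f^2)$ denotes the entropy of $x_i\mapsto f(x_1,\dots,x_N)^2$ with respect to the conditional law $\mu_i=\mu_i(dx_i|x^{\hat i})$, viewed as a function of $x^{\hat i}$. Granting this, hypothesis (1) gives $\ent_{\mu_i}(f^2)\le \frac{2}{\rho_{\rm LS,m}}\int|\na_i f|^2\,d\mu_i$ pointwise in $x^{\hat i}$; integrating against $\mu^{(N)}$, summing over $i$ and using $\sum_{i}|\na_i f|^2=|\na f|^2$ together with Fubini yields $\ent_{\mu^{(N)}}(f^2)\le \frac{2}{\rho_{\rm LS,m}(1-\gamma_0)^{2}}\int|\na f|^2\,d\mu^{(N)}$, which is exactly (\ref{BJ3}). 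So the whole content is the tensorization inequality, and this is where Zegarlinski's argument enters.

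To prove the tensorization I would follow Zegarlinski's scheme for Gibbs measures. The starting point is the exact entropy chain rule
\[
\ent_{\mu^{(N)}}(f^2)=\int \ent_{\mu_1}(f^2)\,d\mu^{(N)}+\ent_{\tilde\mu_1}(P_1 f^2),
\]
where $P_1$ is the conditional expectation $g\mapsto\int g\,d\mu_1(\cdot\,|x^{\hat 1})$ and $\tilde\mu_1$ the marginal of $\mu^{(N)}$ on $x^{\hat1}$, together with its symmetric analogues over all coordinates. The crucial analytic input is a quantitative control of how $\mu_i(\cdot\,|x^{\hat i})$ depends on the conditioning variables: when $x_j$ is moved, the drift felt by particle $i$ changes by $\frac{1}{N-1}\big(\na_x W(x_i,x_j)-\na_x W(x_i,x_j')\big)$, so reflection coupling for the one-particle conditional diffusion --- whose $L^1$-Wasserstein contraction profile is encoded by the integrand appearing in (\ref{thm1a}), cf.\ (H3) and \cite{Wu09jfa,EberlePTRF} --- yields a Lipschitz bound for $x_j\mapsto\mu_i(\cdot\,|x^{\hat i})$ in $L^1$-Wasserstein distance with constant $\frac{1}{N-1}\,c_{Lip,m}\,\sup_{x,y,|z|=1}|\na_{x,y}^2 W(x,y)z|$. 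Summing the influences of the $N-1$ other particles collapses to the single ``total influence'' constant $\gamma_0=c_{Lip,m}\,\sup_{x,y,|z|=1}|\na_{x,y}^2 W(x,y)z|$, the mean-field translation of Zegarlinski's mixing condition; the hypothesis $\gamma_0<1$ makes the feedback between the conditional expectations a strict contraction, and Zegarlinski's recursion (geometric summation of this feedback) produces the tensorization constant. The exponent $2$ on $(1-\gamma_0)$ is the usual one coming out of that recursion: the conditional expectation operators contract gradients with a loss governed by $\gamma_0$, and this loss enters the quadratic energy functional and is summed geometrically. Alternatively, the whole step can be phrased as a verification of the hypotheses of Zegarlinski's general theorem for Gibbs measures and an appeal to it; the $\frac{1}{N-1}$ normalization in (\ref{MFS})--(\ref{MFM}) is precisely what makes the relevant sum over neighbours bounded uniformly in $N$.

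The main obstacle is the non-product structure of the marginals: integrating out one mean-field particle does not leave a mean-field measure (it creates genuine many-body interactions among $x_2,\dots,x_N$), so one cannot induct on $N$. The remedy is not to integrate out particles at all but to set up a fixed-point argument directly on the family $(P_i)_{1\le i\le N}$ of conditional expectations, tracking that the combined interaction matrix --- here $\frac{1}{N-1}\big(1_{i\ne j}\,\sup_{x,y,|z|=1}|\na_{x,y}^2 W(x,y)z|\big)_{1\le i,j\le N}$, whose operator norm is at most $\sup_{x,y,|z|=1}|\na_{x,y}^2 W(x,y)z|$ \emph{uniformly in $N$} --- acts with norm strictly below $1/c_{Lip,m}$, so that the resulting geometric series converges. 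Secondary technical points: one must justify the reflection-coupling estimate and the identification of $c_{Lip,m}$ as the contraction profile for the one-particle conditional SDE, which uses the linear growth (\ref{SHJT1}) of $x\cdot\na V$ in (H1) to make that dynamics confining and well posed, and (H2) for the integrability of $\mu^{(N)}$ and of its conditionals; and one must carry out the standard truncation/regularization of $f$ to pass from the formal identities above to inequalities valid for all $f\in C^1_b((\rr^d)^N)$. Finally it is worth recording that $\gamma_0<1$ is the natural threshold for this method, consistent with the phase-transition picture of the Curie--Weiss example discussed above.
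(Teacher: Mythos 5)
Your proposal is essentially the paper's own argument: reduce to Zegarlinski's theorem for Gibbs measures (Theorem~\ref{thmZ}), estimate the interdependence coefficients $c_{ij}^{\bf Z}$ via the Lipschitzian profile $c_{Lip,m}$ from (H3), and observe that the $\frac{1}{N-1}$ normalization makes the row/column sums of the interdependence matrix equal to $\gamma_0$ uniformly in $N$. The paper's Section~5 does exactly this.

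One technical point is worth flagging, because it is where the argument would need care if written out. You cast the key estimate as a $W_1$-Lipschitz dependence of $\mu_i(\cdot\,|x^{\hat i})$ on $x_j$, obtained from reflection coupling, which by Kantorovich duality would bound $|\nabla_{x_j}\mu_i(g)|$ only by $\|\nabla_i g\|_\infty$. Zegarlinski's coefficient $c_{ij}^{\bf Z}$ in (\ref{Z-Dobrushin condition C2}) is formulated with the \emph{averaged} gradient $(\mu_j(|\nabla_j f|^2))^{1/2}$ on the right, so a sup-norm bound on $\nabla_j f$ is not directly usable. What the paper actually proves (Lemmas~\ref{thmZ-lem1} and~\ref{thmZ-lem2}) is the stronger pointwise-in-$x^{\hat i}$ bound
\[
|\nabla_{x_j}\mu_i(g)|\ \le\ \frac{1}{N-1}\,c_{Lip,m}\,\|\nabla^2_{x,y}W\|_\infty\ \mu_i(|\nabla_{x_i} g|),
\]
obtained by writing $\nabla_{x_j}\mu_i(g)$ as a covariance and inserting the Poisson operator $(-\LL_i)^{-1}$, whose Lipschitz-to-Lipschitz norm is bounded by $c_{Lip,m}$ (Lemma~\ref{lem-Lip}); from this one passes to (\ref{Z-Dobrushin condition C2}) via Cauchy--Schwarz. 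The Poisson-operator bound and your reflection-coupling contraction both descend from the same one-particle dissipativity profile $b_0$, so the ingredients agree, but the precise form that plugs into Zegarlinski's recursion is the $L^1(\mu_i)$-averaged bound, not the Wasserstein one. With that replacement your outline matches the paper's proof step for step, including the $\frac{1}{N-1}$ bookkeeping, the appeal to Theorem~\ref{thmZ}, and the resulting constant $\rho_{\rm LS,m}(1-\gamma_0)^2$.
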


\brmk
In this remark we present one approach to establish the first assumption in Theorem \ref{thm2}. Suppose that $
\nabla_x^2W \geq -K_0 I_d
$ and $V$ is super-convex in the sense that for any $K>0$ there exists $R>0$ such that
\[
\nabla^2 V (x)\geq K I_d, \text{ for } |x|\geq R
\]
then $V$ can be decomposed as the sum of a uniform convex function $V_c$ and a bounded function $V_b$ such that
\[
\nabla^2 V_c\geq (K_1+K_0) I_d,
\]
therefore, thanks to Bakry-Emery criterion, the probability measure
\[
\frac{1}{\tilde{Z}}\exp\left(-V_c(x_i)-\frac{1}{N-1}\sum\limits_{j:j\neq i}W(x_i,x_j)\right){\rm d} x_i
\]
satisfies a log-Sobolev inequality with constant $K_1$. By the bounded perturbation theorem, the conditional measures $\mu_i=\mu_i(\cdot|x^{\hat i}),i=1,\cdots,N$ satisfy a log-Sobolev inequality with a uniform constant $\rho_{LS,m}\ge K_1 \exp(-(\sup V_b -\inf V_b))$ which does not depend on $i,x,N$.
\nrmk

\bexa
Let us go back to the Curie-Weiss example in dimension 1: $d=1$, $V(x)=\beta(x^4/4-x^2/2)$, $W(x,y)=-\beta Kxy$ where $\beta>0$. As given before we have
$$c_{Lip,m}\le \sqrt{\frac\pi\beta}e^{\beta/4}.$$
So that
$$\gamma_0\le c_{Lip,m}  \|\nabla_{x,y}^2W\|_\infty\le \sqrt{\pi\beta}e^{\beta/4} |K|$$
which will be smaller than 1 if $\beta$ or $K$ is sufficiently small.
\nexa

\subsection{Exponential convergence of McKean-Vlasov equation in entropy and in the Wasserstein metric $W_2$} We present now an application of
the uniform log-Sobolev inequality in Theorem \ref{thm2} to the non-linear McKean-Vlasov equation.

Recall at first the relative entropy of a probability measure $\nu$ w.r.t. the given probability measure $\mu$ on $\rr^d$:
\beqq\label{Rentropy}
H(\nu|\mu):=  \begin{cases}
\int f\log f d\mu={\rm Ent}_\mu(f),\ &\text{ if }\ \nu\ll\mu, f:=\frac{d\nu}{d\mu}\\
+\infty, &\text{ otherwise. }
\end{cases}
\neqq
The $L^p$-Wasserstein distance $W_p(\nu,\mu)$ is defined by
$$
W_p(\mu,\nu)=\inf_{(X,Y)} \left(\ee |X-Y|^p\right)^{1/p}
$$
where the infimum is taken over all couples $(X,Y)$ of random variables defined on some probability space, such that the laws of $X,Y$ are respectively $\mu,\nu$ (a such couple as well as their joint law is called a {\it coupling of $(\mu,\nu)$}). Recall that the space $\MM_1^p(\rr^d)$ of probability
measures with finite $p$-moment, equipped with $L^p$-Wasserstein distance $W_p$, is complete and separable (Villani \cite{Villani}).

The Fisher-Donsker-Varadhan's information of $\nu$ w.r.t. $\mu$ is defined by
\beqq\label{Inform1}
I(\nu|\mu):=\begin{cases} \int |\nabla \sqrt{f}|^2 d\mu, \ &\text{ if } \nu\ll\mu, \sqrt{f}:=\sqrt{\frac {d\nu}{d\mu}} \in H^1_\mu \\
+\infty, &\text{ otherwise. }
\end{cases}
\neqq
where $H^1_\mu$ is the domain of the Dirichlet form   $\EE_\mu[g]=\int |\nabla g|^2 d\mu$ (well defined if $\mu$ has $C^1$-density w.r.t. $dx$). Recall that the log-Sobolev inequality for $\mu^{(N)}$ can be rewritten in
\beqq\label{BJ5}
\rho_{LS}(\mu^{(N)}) H(\nu|\mu^{(N)})\le 2 I(\nu|\mu^{(N)}), \ \nu\in\MM_1((\rr^d)^N).
\neqq
What replaces the role of the relative entropy in interacting particle system for the nonlinear McKean-Vlasov equation is the free energy of a probability measure $\nu$ on $\rr^d$:

\beqq\label{freeE}
E_f(\nu):= \begin{cases}H(\nu|\alpha) + \dsp\frac 12 \iint W(x,y) d\nu(x)d\nu(y), \ &\text{ if }   H(\nu|\alpha)<+\infty\\
+\infty &\text{ otherwise}
\end{cases}
\neqq
or more precisely the corresponding mean field entropy
\beqq\label{H_W}
H_W(\nu):= E_f(\nu) -\inf_{\tilde{\nu}\in \MM_1(\rr^d)} E_f(\tilde{\nu}).
\neqq
And the substituter of the Fisher-Donsker-Varadhan's information is:   if $\nu =f(x)dx, \int |x|^2 d\nu(x)<+\infty$ and $\nabla f\in L^1_{loc}(\rr^d)$ in the distribution sense,
\beqq\label{I_W}
I_W(\nu):= \frac 14 \int |\frac{\nabla f(x)}{f(x)} + \nabla V(x) + (\nabla_x W\circledast \nu)(x)|^2 d\nu(x),
\neqq
and $+\infty$ otherwise. Those two objects appeared both in Carrillo-McCann-Villani \cite{CMV03}. The following result generalizes the main result of \cite{CMV03} from the convex framework to the more general non-convex case.

\bthm\label{thmHW} Assume  the uniform marginal log-Sobolev inequality, i.e. \eqref{thm2-a1} with $\rho_{LS,m}>0$, and the uniqueness condition of Zegarlinski (\ref{conditionZ}). Then

\begin{enumerate}

\item There exists a unique minimizer $\nu_\infty$ of $H_W$ over $\MM_1(\rr^d)$;

\item The following (nonlinear) log-Sobolev inequality
\beqq\label{H_WI_W}
\rho_{LS} H_W(\nu)\le 2 I_W(\nu), \ \nu\in \MM_1(\rr^d)
\neqq
holds, where
$$
\rho_{LS}:=\limsup_{N\to\infty} \rho_{LS}(\mu^{(N)}) \ge \rho_{LS,m}(1-\gamma_0)^{2}.
$$

\item The following Talagrand's transportation inequality holds
\beqq\label{T2}
\rho_{LS} W^2_2(\nu,\nu_\infty)\le 2 H_W(\nu), \ \nu\in \MM_1(\rr^d)
\neqq

\item For the solution $\nu_t$ of the McKean-Vlasov equation with the given initial distribution $\nu_0$ of finite second moment,
\beqq\label{thmHWa}
H_W(\nu_t) \le e^{-t \cdot \rho_{LS} /2} H_W(\nu_0), \ t\ge0
\neqq
and in particular
\beqq\label{thmHWbb}
W^2_2(\nu_t, \nu_\infty) \le \frac 2{\rho_{LS}}e^{-t \cdot \rho_{LS} /2} H_W(\nu_0), \ t\ge0
\neqq
\end{enumerate}
\nthm

\brmk
In the work by Carrillo-McCann-Villani \cite{CMV03}, presuming the presence of confining potential, such results were obtained in the case where $W(x,y)=W_0(x-y)$ and
\begin{enumerate}
  \item either $\nabla^2V > ||(\nabla^2W)^-||_{L^\infty}$ (in particular, $V$ is uniformly strictly convex);
  \item or $W$ is strictly convex at infinity, and both $V$ and $W$ are strictly convex (possibly degenerate at the origin).
\end{enumerate}
In particular, $V$ was required to be convex in both situations. If we consider the case in dimension one, $V(x)=\beta(x^4/4-x^2/2)$ and $W_0(x)=-\beta K x^2/2$ with $K\ge0$. Then by analogous calculations than for the Curie-Weiss model, we have $c_{Lip,m}\le \sqrt{\pi/\beta}e^{\beta(1+K)^2/4}$ so that $\gamma_0\le \sqrt{\pi\beta}Ke^{\beta(1+K)^2/4}$ and thus the conditions \eqref{thm2-a1}, \eqref{conditionZ} are verified for $\beta$ or $K$ small enough for example, cases not covered in \cite{CMV03}. Our conditions are quite comparable with the results obtained in \cite{EGZ16} but they only consider convergence in $L^1$-Wasserstein distance. Remark also that the conditions are comparable to the assumptions made in \cite{DEGZ19} to get an uniform in time propagation of chaos (but in $L^1$-Wasserstein distance) which explains why we may pass to the limit in the number of particles.
\nrmk

\section{Lipschitzian spectral gap for conditional distribution}

Notice that the conditional distribution $\mu_i(dx_i):=\mu_i(dx_i|x_j, j\ne i)$ of $x_i$ knowing $x^{\hat i}:=(x_j)_{j\ne i}$ of our mean field measure $\mu^{(N)}$ defined in (\ref{MFM}) is given by
\[
{\rm d} \mu_i(x_i)= \frac{1}{Z_i} \exp\left\{-V(x_i)-\frac{1}{N-1}\sum\limits_{j:\ j\neq i}W(x_i,x_j)\right\} {\rm d}x_i
\]
where $Z_i=Z_i(x^{\hat i})$ is the normalization factor.  Let
$$
H_i(x_i):= V(x_i) + \frac{1}{N-1}\sum\limits_{j:\ j\neq i}W(x_i,x_j)
$$
be the potential associated with $\mu_i$. The generator $\LL^{(N)}_i=\Delta_i -\na_iH_i\cdot\na_i$ given in (\ref{generators}), with $(x_j)_{j\ne i}$ fixed, is symmetric w.r.t. $\mu_i$. By the definition (\ref{21a}) of $b_0(r)$, for all $x, y\in (\rr^d)^N$,
$$\aligned
&\langle \frac{x_i-y_i}{|x_i-y_i|}, -[\nabla_i H (x)-\nabla_i H(x^{\hat i,y_i})]\rangle\\
 &=\frac 1{N-1}\sum_{j\ne i}\langle \frac{x_i-y_i}{|x_i-y_i|}, -[(\nabla V(x_i) + \na_x W(x_i,x_j))-(\nabla V(y_i) + \nabla_x W(y_i,x_j)]\rangle\\
&\le b_0(|x_i-y_i|)\endaligned
$$
where $x^{\hat i, y_i}\in (\rr^d)^N$ is given by $(x^{\hat i,y_i})_j=x_j, j\ne i$, $(x^{\hat i,y_i})_i=y_i$.
So we have
the following result (due to the third named author \cite{Wu09jfa}), which is the starting point of our investigation.

\begin{lem}\label{lem-Lip} Assume (\ref{thm1a}).
Then the Poisson operator $(-\LL_i)^{-1}$ on the Banach space $C_{\rm Lip,0}(\rr^d)$ of Lipschitzian continuous functions $f$ on $\rr^d$ with $\mu_i(f)=0$, equipped with the norm $\|f\|_{\rm Lip}$, is bounded and its norm
\beqq\label{lem-Lipa}
||(-\mathcal{L}_i)^{-1}||_{\text{Lip}}\leq c_{Lip,m}
\neqq
where $c_{Lip,m}$ is given in (\ref{thm1a}).
In particular the spectral gap $\lambda_1(\mu_i)$ of $\mathcal{L}_i$ on $L^2(\mu_i)$ satisfies
\beqq\label{lem-Lipb}
\lambda_1(\mu_i)\ge \frac{1}{c_{Lip,m}}.
\neqq
\end{lem}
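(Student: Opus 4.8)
The plan is to realise $\LL_i$ as the generator of the one--particle diffusion $dX_t=\sqrt2\,dB_t-\nabla H_i(X_t)\,dt$ on $\rr^d$ (the remaining coordinates $x^{\hat i}$ being frozen), to run a reflection coupling of two copies, and to convert the desired gradient bound on the Poisson operator into an explicit coupling estimate driven by $b_0$. First I would introduce the auxiliary function: setting $B(r):=\frac14\int_0^r b_0(u)\,du$ and
\[
v(r):=e^{-B(r)}\,\frac14\int_r^{\infty}s\,e^{B(s)}\,ds\ \ (\ge 0),\qquad u(r):=\int_0^r v(s)\,ds,
\]
the very definition (\ref{thm1a}) says $v(0)=c_{Lip,m}<\infty$, while $u(0)=0$, $u'=v\ge0$, $u\ge0$, and $4u''(r)+b_0(r)u'(r)=-r$ for $r>0$; in particular $u(r)/r\to u'(0)=c_{Lip,m}$ as $r\to0^+$.

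Next I would couple. For $x\ne y$ let $(X_t,Y_t)$ be the reflection coupling of two copies of this diffusion started from $(x,y)$, coalescing (and thereafter moving together) at $\tau:=\inf\{t\ge0:\,X_t=Y_t\}$. With $r_t:=|X_t-Y_t|$, the reflection makes the radial It\^o correction term vanish, so on $\{t<\tau\}$ we have $dr_t=2\sqrt2\,d\beta_t+Z_t\,dt$ for a one--dimensional Brownian motion $\beta$, where $Z_t=-\langle\frac{X_t-Y_t}{r_t},\,\nabla H_i(X_t)-\nabla H_i(Y_t)\rangle\le b_0(r_t)$; this last inequality is precisely the computation carried out just above the lemma. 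Applying It\^o's formula to $u(r_t)$, using $u'\ge0$ and $Z_t\le b_0(r_t)$ so that $4u''(r_t)+Z_tu'(r_t)\le 4u''(r_t)+b_0(r_t)u'(r_t)=-r_t$, and Dynkin's formula (the stochastic integral being a true martingale because $u'=v$ is bounded), I obtain after letting $t\to\infty$
\[
\ee\Big[\int_0^{\tau}r_s\,ds\Big]\ \le\ u(|x-y|)\ <\ +\infty ;
\]
in particular $\tau<\infty$ almost surely.

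Then I would conclude. For $g\in C_{\rm Lip,0}(\rr^d)$, exponential ergodicity of the diffusion gives that $f:=\int_0^{\infty}P^i_t g\,dt$ (with $(P^i_t)$ the semigroup generated by $\LL_i$) converges pointwise — $\mu_i$ has a finite first moment by (H1) — solves $-\LL_i f=g$ with $\mu_i(f)=0$, is by a Liouville argument the unique Lipschitz solution of zero $\mu_i$--mean, and is $C^1$ by interior elliptic regularity. Since $g(X_s)=g(Y_s)$ for $s\ge\tau$, for all $x\ne y$
\[
|f(x)-f(y)|=\Big|\int_0^{\infty}\ee\big[g(X_s)-g(Y_s)\big]\,ds\Big|\le\|g\|_{\rm Lip}\,\ee\Big[\int_0^{\tau}r_s\,ds\Big]\le\|g\|_{\rm Lip}\,u(|x-y|),
\]
and letting $y\to x$ yields $|\nabla f(x)|\le\|g\|_{\rm Lip}\lim_{r\to0^+}u(r)/r=c_{Lip,m}\|g\|_{\rm Lip}$, whence $\|f\|_{\rm Lip}=\|\nabla f\|_\infty\le c_{Lip,m}\|g\|_{\rm Lip}$, i.e. (\ref{lem-Lipa}). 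The bound (\ref{lem-Lipb}) on $\lambda_1(\mu_i)$ then follows, since for a reversible diffusion the inverse $L^2$--spectral gap is controlled by the Lipschitz norm of the Poisson operator (see \cite{Wu09jfa}).

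The only genuine obstacle is making the stochastic step rigorous when $\nabla^2 H_i$ is merely bounded from below (possibly by a negative constant): one must verify that $u$ is well defined with bounded derivative, that $\tau<\infty$ and $\int_0^{\tau}r_s\,ds$ is integrable, and that the Dynkin estimate is valid — all of which rest on the dissipativity at infinity encoded in $c_{Lip,m}<\infty$ (equivalently, $B(r)\to-\infty$ fast enough). Secondary technical points are the pointwise convergence and $C^1$--regularity of $f=\int_0^{\infty}P^i_t g\,dt$, and the deduction of the $L^2$--spectral gap from (\ref{lem-Lipa}).
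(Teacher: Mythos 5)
Your proof is correct in its essentials and reproduces the standard argument for this type of Lipschitz bound on the Poisson operator. The paper itself does not prove this lemma; it simply cites \cite{Wu09jfa}, and your reconstruction (reflection coupling of two copies of the conditional one-particle diffusion, applied to the concave auxiliary function $u$ built from $b_0$ and solving $4u''+b_0u'=-r$, then Dynkin to get $\ee\int_0^\tau r_s\,ds\le u(|x-y|)$, then $u(r)/r\to c_{Lip,m}$ at $0$) is exactly the coupling-by-reflection approach one finds in that reference. Your identification of the diffusion coefficient $2\sqrt{2}$ for $r_t$, the vanishing of the It\^o correction under reflection, and the drift comparison $Z_t\le b_0(r_t)$ are all correct; so is the observation that the desired bound falls out on letting $y\to x$. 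You also correctly flag the genuine technicalities (boundedness of $u'=v$, integrability of $\int_0^\tau r_s\,ds$, regularity and well-definedness of $f=\int_0^\infty P_t^i g\,dt$), which in the present setting are controlled by (H1)--(H2) forcing $b_0(r)$ to be eventually bounded above by a negative-slope affine function, hence $v$ bounded. The one step you delegate to \cite{Wu09jfa} rather than prove --- that a Lipschitz bound on $(-\LL_i)^{-1}$ implies the $L^2$ spectral gap bound $\lambda_1(\mu_i)\ge 1/c_{Lip,m}$ --- is indeed not immediate from Cauchy--Schwarz alone and does require the argument of the cited paper (or an eigenfunction/approximation argument for reversible diffusions); since the statement in the paper itself rests entirely on that citation, this delegation is consistent with what the authors do.
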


\section{Uniform Poincar\'{e} inequality : proof of Theorem \ref{thm1}}
Let $V\in C^2(\rr^d)$ be the confinement potential, $U$  a $C^2$-potential of interaction on $(\mathbb{R}^d)^N$ and $H(x_1,\cdots,x_N)=\sum_{i=1}^N V(x_i) + U(x_1,\cdots, x_N)$ the Hamiltonian.
Now consider the probability measure
\[
d\mu:=\frac{1}{Z}e^{-H} dx_1\cdots dx_N
\]
where $Z=\int_{(\rr^d)^N} e^{-H(x)} dx$ is the normalization constant (called often {\it partition function}), assumed to be finite. We denote by $\mu_i=\mu(dx_i|x^{\hat i})$ the conditional distribution of $x_i$ given $x^{\hat i}:=(x_1,\cdots,x_{i-1},x_{i+1},\cdots,x_{N})$ under $\mu$. It is given by
\[
\mu_i({\rm d}x_i)=\frac{1}{Z_i}e^{-U(x)-V(x_i)}{\rm d} x_i,\ Z_i=Z_i(x^{\hat i}):=\int e^{-U(x)-V(x_i)}{\rm d} x_i<+\infty  \ \text{(assumed)}.
\]
We shall describe below conditions on the Hamiltonian $H$ such that $\mu$ satisfies a Poincar{\'e} inequality, namely for some positive constant $\rho$,
\[
\rho\int f^2{\rm d} \mu\leq \int |\nabla f|^2{\rm d} \mu
\]
for every smooth function $f\in C_b^1((\rr^d)^N)$. The largest $\rho$ is called the spectral gap of $\mu$, denoted as $\lambda_1(\mu)$.

\begin{prop}\label{Prop:Ledoux_(PI if marginal PI)} Assume that $Z=\int_{(\rr^d)^N} e^{-H(x)} dx<+\infty, Z_i(x^{\hat i})<+\infty$ for all $i, x^{\hat i}$. If
\begin{enumerate}
\item the marginal conditional distributions $\mu_i$ satisfy the uniform Poincar\'e inequality, i.e.
\begin{equation}\label{Condition on SG of conditional measures}
\lambda_{1,m}:=\inf\limits\limits_{1\leq i\leq N, x^{\hat i}\in (\rr^d)^{N-1}}\lambda_1(\mu_i)>0,
\end{equation}

\item
for some constant $h\in \rr$,
\begin{equation}\label{Condition on Hessian off diagonal}
(1_{i\neq j} \nabla_{x_i,x_j}^2 U)\geq h I_{dN},
\end{equation}
in the sense of nonnegative definiteness of symmetric matrices;

\end{enumerate}
then
\[
\lambda_1(\mu)\geq h+\lambda_{1,m}.
\]
\end{prop}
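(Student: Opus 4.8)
The plan is to run a semigroup ($\Gamma_2$/Bakry--\'Emery type) argument in which the conditional spectral gap $\lambda_{1,m}$ takes over the role that a uniform lower bound on $\nabla^2 V$ plays in the classical computation, while the off-diagonal Hessian bound $h$ supplies the coupling between the particles. We may assume $\lambda_{1,m}+h>0$, otherwise the conclusion is vacuous. Let $(P_t)_{t\ge 0}=(e^{t\LL})_{t\ge 0}$ be the Markov semigroup generated by $\LL=\Delta-\nabla H\cdot\nabla=\sum_{i=1}^N\LL_i$ on $L^2(\mu)$. For $f$ smooth and bounded with bounded derivatives one has $\frac{d}{dt}\var_\mu(P_tf)=-2\int|\nabla P_tf|^2\,d\mu$ and $\var_\mu(P_tf)\to 0$ as $t\to\infty$ by ergodicity, so
\[
\var_\mu(f)=2\int_0^\infty\Phi(t)\,dt,\qquad \Phi(t):=\int_{(\rr^d)^N}|\nabla P_tf|^2\,d\mu .
\]
Differentiating the pointwise Bochner identity $\nabla(\LL u)=\LL\nabla u-(\nabla^2H)\nabla u$ along the flow, the $\rr^{dN}$-valued function $G:=\nabla P_tf$ solves $\partial_tG=\LL G-(\nabla^2H)G$, whence, after integration by parts,
\[
\Phi'(t)=-2\int\|\nabla^2P_tf\|_{\rm HS}^2\,d\mu-2\int\langle\nabla P_tf,\,(\nabla^2H)\,\nabla P_tf\rangle\,d\mu .
\]

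Next I would exploit the block structure $\nabla^2H=A+B$, where $A=\mathrm{diag}\big(\nabla^2_{x_i,x_i}H\big)_{1\le i\le N}$ is block-diagonal and $B=\big(1_{i\ne j}\nabla^2_{x_i,x_j}U\big)_{1\le i,j\le N}$ is purely off-diagonal (the term $\sum_iV(x_i)$ contributes to $A$ only). Hypothesis (2) gives $\langle G,BG\rangle\ge h|G|^2$ pointwise. For the remaining terms, bound $\|\nabla^2g\|_{\rm HS}^2\ge\sum_{i=1}^N\|\nabla_i^2g\|_{\rm HS}^2$ and, for each $i$, integrate first in $x_i$ against the conditional law $\mu_i$, whose potential has Hessian exactly $\nabla^2_{x_i,x_i}H$. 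The integrated Bochner identity for the one-particle generator $\LL_i$, followed by the elementary spectral estimate $\|\LL_ig\|_{L^2(\mu_i)}^2\ge\lambda_1(\mu_i)\int|\nabla_ig|^2\,d\mu_i$ and hypothesis (1), gives
\begin{align*}
\int\Big(\|\nabla_i^2g\|_{\rm HS}^2+\langle\nabla_ig,(\nabla^2_{x_i,x_i}H)\nabla_ig\rangle\Big)\,d\mu_i
&=\int(\LL_ig)^2\,d\mu_i\\
&\ge\lambda_1(\mu_i)\int|\nabla_ig|^2\,d\mu_i\ge\lambda_{1,m}\int|\nabla_ig|^2\,d\mu_i .
\end{align*}
Integrating also in the remaining variables and summing over $i$ yields the key inequality
\[
\int\|\nabla^2g\|_{\rm HS}^2\,d\mu+\int\langle\nabla g,A\nabla g\rangle\,d\mu\ge\lambda_{1,m}\int|\nabla g|^2\,d\mu .
\]

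Applying this with $g=P_tf$ and using $\langle G,BG\rangle\ge h|G|^2$ produces the differential inequality $\Phi'(t)\le-2(\lambda_{1,m}+h)\Phi(t)$; Gr\"onwall's lemma gives $\Phi(t)\le e^{-2(\lambda_{1,m}+h)t}\Phi(0)=e^{-2(\lambda_{1,m}+h)t}\int|\nabla f|^2\,d\mu$, and therefore
\[
\var_\mu(f)=2\int_0^\infty\Phi(t)\,dt\le\frac{1}{\lambda_{1,m}+h}\int_{(\rr^d)^N}|\nabla f|^2\,d\mu ,
\]
that is, $\lambda_1(\mu)\ge\lambda_{1,m}+h$; a density argument then extends this to all $f\in C^1_b((\rr^d)^N)$.

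The step I expect to require the most care is not the algebra above but its analytic justification: regularity and adequate integrability of $P_tf$ and $\nabla P_tf$, the absence of boundary terms in the two integrations by parts (in the formula for $\Phi'$ and in the integrated Bochner identity for $\LL_i$), and the ergodic decay $\var_\mu(P_tf)\to0$. These are standard once one knows that $\nabla^2H$ is bounded from below and $H$ has at least quadratic growth at infinity, which is precisely the framework (H1)--(H2) in which the proposition is used, and can otherwise be recovered by truncating $V$ and $U$ and passing to the limit; I would invoke the standard references rather than reprove these facts. An equivalent, semigroup-free presentation would write $\var_\mu(f)$ through the Helffer--Sj\"ostrand formula in terms of the Witten Laplacian $(-\LL)\otimes I_{dN}+\nabla^2H$ acting on $1$-forms and bound it from below by $\lambda_{1,m}+h$ using the very same block decomposition and one-particle estimate; the computation is verbatim the same.
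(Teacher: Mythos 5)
Your proof is correct and is essentially the paper's argument in a different presentation. The paper invokes the dual characterization of the Poincar\'e inequality ($\int(\LL f)^2\,d\mu\ge\lambda\int|\nabla f|^2\,d\mu$ iff $\lambda_1(\mu)\ge\lambda$, citing \cite[Prop.\ 4.8.3]{BGL-MarkovDiffusion}) and then applies Bakry--\'Emery's $\Gamma_2$ formula, the splitting of $\nabla^2 H$ into block-diagonal and off-diagonal pieces, the one-particle dual Poincar\'e estimate for each $\mu_i$, and hypothesis (2) exactly as you do; your semigroup/Gr\"onwall computation is precisely the standard proof of that dual characterization, so the two arguments coincide once unrolled.
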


This result is essentially due to Ledoux \cite{LedouxLSI}. Indeed, in the case of $d=1$, if $\Hess(U)\geq \underline{\lambda}\mbox{I}_{N}$ and $\partial_{ii}U(x)\leq \bar{\lambda}$ for all $i$ and $x^{\hat i}$, then for every $v=(v_1,v_2,\cdots,v_N)\in \mathbb{R}^N$,
\[
\sum_{i\neq j} v_i\partial_{ij}^2 U v_j =\langle \Hess(U)v,v\rangle - \sum_i v_i^2\partial_{ii}^2U \geq (\underline{h}-\bar{h})|v|^2
\]
i.e. the assumption (\ref{Condition on Hessian off diagonal}) holds with $h=\underline{\lambda}-\overline{\lambda}$. This proposition gives $\lambda_1(\mu)\ge \lambda_{1,m}+\underline{\lambda}-\overline{\lambda}$, which is the original result of Ledoux \cite{LedouxLSI}.

For the convenience of the reader, we reproduce the beautiful proof of Ledoux \cite[Prop. 3.1]{LedouxLSI}.
\begin{proof} Of course we may and will assume that $\lambda_{1,m} +h>0$.
Let $\LL=\Delta - \nabla H \cdot \nabla$ be the symmetric generator associated with the probability measure $\mu$. By the dual description of Poincar\'e inequality \cite[Prop. 4.8.3]{BGL-MarkovDiffusion}, the conclusion above is equivalent to
\[
\int (\LL f)^2 {\rm d} \mu \geq (\lambda_{1,m}+h)\int |\nabla f|^2 {\rm d} \mu.
\]
Thanks to the Bakry-Emery's formula $\int \Gamma_2(f){\rm d}\mu = \int (\LL f)^2{\rm d}\mu$ and
$$
\Gamma_2(f)= \|\na^2 f\|^2_{\rm HS} + \<\na^2 H \nabla f , \na f\>
$$
where $||A||_{\rm HS}:=(\sum_{i,j} |a_{ij}|^2)^{1/2}$ is the Hilbert-Schmidt norm of a matrix $A=(a_{ij})$, we have
\begin{eqnarray*}
\int (\LL f)^2 {\rm d} \mu  &=& \int \left(||\nabla^2 f||_{\rm HS}^2 + \langle \na^2H \nabla f,\nabla f\rangle\right){\rm d} \mu\\
     &=& \int \left( ||\nabla^2 f||_{\rm HS}^2 + \sum_{i=1}^n \langle\Hess(V)(x_i)\nabla_{x_{i}} f,\nabla_{x_{i}} f\rangle + \langle \Hess(U)\nabla f,\nabla f\rangle \right){\rm d} \mu\\
     &\geq& \sum\limits_{1\leq i\leq N} \int \int_{\mathbb{R}^d}\left(||\nabla_{x_i}^2 f||_{\rm HS}^2 + \langle(\Hess(V)(x_i)+\nabla^2_{x_i,x_i}U)\nabla_{x_{i}} f,\nabla_{x_{i}} f\rangle\right) {\rm d} \mu_i d\mu \\
       & & +\int \sum_{i\neq j}\langle \nabla_{x_i,x_j}^2U\nabla_{x_{i}}f,\nabla_{x_{j}}f\rangle{\rm d} \mu
\end{eqnarray*}

Applying the above characterization of the Poincar\'{e} inequality  but to the conditional measures $\mu_i$, we have
\[
\int\left[||\nabla^2_{x_i}f||_{\rm HS}^2 + \langle(\Hess(V)(x_i)+\nabla^2_{x_i,x_i}U)\nabla_{x_{i}} f,\nabla_{x_{i}} f\rangle\right]{\rm d} \mu_i \geq \lambda_{1,m} \int |\nabla_{x_i} f|^2 {\rm d} \mu_i
\]
for any $i$ and any given $x^{\hat i}$. Moreover by the assumption (\ref{Condition on Hessian off diagonal}),
\[
\int \sum_{i\neq j}\langle \nabla_{x_ix_j}^2U \nabla_{x_{i}}f, \nabla_{x_{j}}f\rangle{\rm d} \mu \geq h\int |\nabla f|^2 {\rm d} \mu.
\]
This, combined with the previous inequality, yields the desired inequality.
\end{proof}

We  come back to the mean field setting.
\begin{proof}[Proof of Theorem \ref{thm1}]
We shall apply Proposition \ref{Prop:Ledoux_(PI if marginal PI)} to $\mu=\mu^{(N)}$. With the notations above, the interaction potential $U$ is then given by
\beqq\label{thm1e}
U(x)=\frac{1}{N-1}\sum\limits_{1\le i<j\le N}W(x_i,x_j)=\frac12\sum_{i=1}^N U_i(x)
\neqq
where $U_i(x)=\frac{1}{N-1}\sum_{j: j\ne i} W(x_i,x_j)$.
For $i\neq j$,
\[
\na^2_{x_{i},x_{j}} U = \frac{1}{N-1}(\nabla^2_{x,y} W)(x_i,x_j)
\]
therefore the assumption (\ref{thm1b}) implies the condition (\ref{Condition on Hessian off diagonal}) with constant $h$ in Proposition \ref{Prop:Ledoux_(PI if marginal PI)}.

On the other hand, since $\mu_i(dx_i|x^{\hat i}) = e^{-[V(x_i)+U_i(x)]}dx_i/Z_i(x^{\hat i})$ and
$$
-\<\frac{x_i-y_i}{|x_i-y_i|}, \nabla_{x_i}[V(x_i)+U_i(x)]-\nabla_{x_i}[V(y_i) + U_i(x^{\hat i,y_i}) ]\> \le b_0(|x_i-y_i|)
$$
as noted in \S 3, thanks to the assumption (\ref{thm1a}), Lemma \ref{lem-Lip} yields $\lambda_1(\mu_i)\ge 1/c_{\rm Lip,m}$.

Hence we can apply Proposition \ref{Prop:Ledoux_(PI if marginal PI)} to the invariant measure $\mu^{(N)}$, and obtain (\ref{thm1d}).
\end{proof}

\bprf[\bf Proof of Corollary \ref{thm1-cor2}] In this particular context $W(x,y)=W_0(x-y)$, for $U(x)$ given by (\ref{thm1e}),
$$
\nabla^2_{x_i,x_i}U(x) =\frac 1{N-1}\sum_{j\ne i} (\na^2W_0)(x_i-x_j);\  \na^2_{x_i,x_j} U = -\frac 1{N-1}(\na^2 W_0)(x_i-x_j) \text{ for } i\ne j
$$
i.e. $\na^2 U =-\frac 1{N-1}(A_{ij})$ where $A_{ij}=(\na^2 W_0)(x_i-x_j)$ for $i\ne j$ and $A_{ii}=-\sum_{j:j\ne i} A_{ij}$. As $A_{ij}$ is symmetric and
$A_{ij}=A_{ji}$, we have
for any $u=(u_1,\cdots, u_N)$ in $(\rr^d)^N$,
$$
\aligned
-\sum_{i,j} \<u_i, A_{ij} u_j\> &= \sum_{i\ne j}\<-u_i, A_{ij}(u_j-u_i)\>=\sum_{i\ne j}\<u_j, A_{ij}(u_j-u_i)\> \\
&=\frac 1{2}\sum_{i\ne j}\<(u_j-u_i), A_{ij} (u_j-u_i)\>\\
&\ge \frac{c_W}{2} \sum_{i\ne j} |u_j-u_i|^2=c_W \sum_{i, j} \<u_j, u_j-u_i\>\ \text{(by the previous equality with $A_{ij}=I$)}\\
&=c_W N \left(|u|^2 - N|\bar u|^2\right)=c_W N |u-\bar u|^2 \\
&\ge\begin{cases} c_W N |u|^2, \ &\text{ if } c_W\le 0.\\
0 &\text{ if } c_W>0
\end{cases}
\endaligned$$
Therefore $\na^2 U \ge -c^-_W \frac N{N-1} I_{dN}$. Obviously $\na^2_{x_i,x_i} U \le C_W I_d$. Then
$$
(1_{i\ne j} \nabla^2_{x_i,x_j}U) = \na^2 U - (1_{i=j} \na^2_{x_i,x_i} U) \ge  -\left(c^-_W \frac N{N-1} +C_W\right) I_{dN}
$$

It remains to apply Proposition \ref{Prop:Ledoux_(PI if marginal PI)} to get the desired spectral gap estimate (\ref{thm1-cor2b}).

\nprf

\bprf[Proof of (\ref{exa2a}) in Example 2] Notice that
$$\aligned
(1_{i\ne j}\nabla^2_{x_i,x_j}W(x_i,x_j)) &=\left( 1_{i\ne j} [-c I_d + \int e^{-\sqrt{-1}(x_i-x_j)\cdot y} yy^T{\rm d}\nu(y)]\right)\\
&=-c (1_{i\ne j} I_d) +  \left( \int e^{-\sqrt{-1}(x_i-x_j)\cdot y} yy^T{\rm d}\nu(y)\right) - (1_{i=j}\int yy^T{\rm d}\nu(y) )\\
&\ge c P_{\bf H} -c(N-1)P_{{\bf H}^\bot} -\lambda_{\rm max}(\Gamma_\nu)I_{dN}
\endaligned
$$
where the second expression in the second line is a positive-definite matrix, %since it holds for all $u=(u_1,u_2,\cdots,u_N)\in (\rr^d)^N$ that
%\begin{align*}
 % \left\langle u\big(\int e^{-\sqrt{-1}(x_i-x_j)\cdot y} y^{\mathsf{T}}yd\nu(y)\big)_{ i,j}, u\right\rangle
  %&= \sum_{i,j} \int \langle u_i,y\rangle \langle y,u_j\rangle e^{-\sqrt{-1}(x_i-x_j)\cdot y} d\nu(y)\\
   %&= \int |\sum \langle u_i,y\rangle e^{-x_i\cdot y}|^2 d\nu(y)\geq 0
%\end{align*}
and $P_{\bf H}, P_{{\bf H}^\bot}$ are respectively the orthogonal projection from $(\rr^d)^N$ to ${\bf H}$ and to its orthogonal complement
${\bf H}^\bot$,
$$\aligned {\bf H}&=\{x=(x_1,\cdots,x_N) ; \bar x:=\frac  1N \sum_{i=1}^N x_i=0\}, \\
 {\bf H}^\bot&=\{x=(x_1,\cdots,x_N)\in (\rr^d)^N; x_1=x_2=\cdots=x_N\}.
 \endaligned$$
Thus we obtain from Theorem \ref{thm1}
$$
\lambda_1(\mu^{(N)})\ge \lambda_{1,m} + \frac 1{N-1} \left(\min\{c, -c(N-1)\} -\lambda_{\rm max}(\Gamma_\nu)\right)
$$
which is the desired inequality (\ref{exa2a}).
\nprf

\section{Uniform log-Sobolev inequality}

Inspired by Dobrushin's uniqueness condition for the Gibbs measures, Zegarlinski \cite[Theorem 0.1]{Zegarlinski} proved a criterion about the logarithmic Sobolev inequality for the Gibbs measure $\mu=e^{-H} dx/Z$ on $(\rr^d)^N$ in terms of the conditional marginal distributions $\mu_i=\mu(dx_i|x^{\hat i})$.

Let us introduce at first Zegarlinski's dependence coefficient $c^{\bf Z}_{ij}$ of $\mu_j$ upon $x_i$: this is the best nonnegative constant such that
\begin{equation}\label{Z-Dobrushin condition C2}
  |\nabla_i(\mu_j(f^2))^{1/2}|\leq (\mu_j(|\nabla_i f|^2))^{1/2} + c_{ij}^{\bf Z} (\mu_j(|\nabla_j f|^2))^{1/2}
  \end{equation}
for all smooth strictly positive functions $f(x_1,\cdots,x_N)$. Obviously $c_{ii}^{\bf Z}=0$.
The matrix $c^{\bf Z}:=(c_{ij}^{\bf Z})_{1\le i,j\le N}$ will be called Zegarlinski's matrix of interdependence in the sequel.

\begin{thm}\label{thmZ}{\bf (Zegarlinski \cite[Theorem 0.1]{Zegarlinski})} If
\begin{enumerate}
\item $\mu_i$ satisfies a uniform log-Sobolev inequality (LSI in short), i.e.
$$\rho_{\rm LS, m}:=\inf\limits\limits_{1\leq i\leq N, x^{\hat i}\in (\rr^d)^{N-1}}\rho_{LS}(\mu_i)>0.$$

\item  The following  \textbf{Zegarlinski's condition } is verified
\beqq\label{thmZa}
\gamma:=\sup_{1\leq i\leq N}\max\{\sum_{1\leq j\leq N} c_{ji}^{\bf Z}, \sum_{1\leq j\leq N} c_{ij}^{\bf Z}\}< 1.
\neqq
\end{enumerate}
Then the Gibbs measure $\mu$ satisfies the logarithmic Sobolev inequality
\beqq\label{thmZb}
\rho_{\rm LS,m}(1-\gamma)^2\ent_{\mu}(f^2)\leq 2\mu(|\nabla f|^2)
\neqq
for all smooth bounded functions $f$ on $(\rr^d)^N$, i.e.
$$\rho_{LS}(\mu)\ge\rho_{\rm LS,m}(1-\gamma)^2.$$
\end{thm}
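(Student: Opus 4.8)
The natural route is to reduce the logarithmic Sobolev inequality for the Gibbs measure $\mu$ to an \emph{approximate tensorization} (a ``quasi-factorization'') of the entropy. Precisely, I would first establish that Zegarlinski's condition $\gamma<1$ implies
\[
\ent_\mu(f^2)\ \le\ \frac{1}{(1-\gamma)^2}\sum_{i=1}^N \mu\big(\ent_{\mu_i}(f^2)\big),\qquad f\in C_b^1\big((\rr^d)^N\big).
\]
Granting this, the uniform conditional log-Sobolev inequality gives $\ent_{\mu_i}(f^2)\le \frac{2}{\rho_{\rm LS,m}}\,\mu_i(|\nabla_i f|^2)$ for every $x^{\hat i}$; integrating against $\mu$, summing over $i$ and using $\sum_i|\nabla_i f|^2=|\nabla f|^2$ then yields $\rho_{\rm LS,m}(1-\gamma)^2\,\ent_\mu(f^2)\le 2\mu(|\nabla f|^2)$, i.e. \eqref{thmZb}. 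Thus the whole content sits in the displayed inequality.

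To prove it I would peel off one coordinate at a time. Writing $P_j g:=\mu_j(g\,|\,x^{\hat j})$ for the $\mu$-reversible conditional expectation that resamples the $j$-th coordinate, the classical chain rule for the entropy along the order $1,\dots,N$ expresses $\ent_\mu(f^2)$ as a sum $\sum_k \mu\big(\ent_{\nu_k}(\cdot)\big)$ of conditional entropies of successive partial averages of $f^2$, and the task is to bound each summand by the local Dirichlet energy $\mu(|\nabla_k f|^2)$. This is not immediate, since after a partial averaging the relevant gradient is that of a square root of an average $\mu_j(\cdots)$ rather than of $f$ itself. Here the defining property \eqref{Z-Dobrushin condition C2} of Zegarlinski's coefficients does exactly the book-keeping: applying $P_j$ turns a control of $|\nabla_i\,\cdot\,|$ into a control of $|\nabla_i\,\cdot\,|+c_{ij}^{\bf Z}\,|\nabla_j\,\cdot\,|$, so that carrying out all the partial averages distributes the gradient content of $f$ over the sites according to the powers of the matrix $c^{\bf Z}=(c_{ij}^{\bf Z})$.

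Summing the resulting series amounts to inverting $I-c^{\bf Z}$, and this is precisely where \emph{both} inequalities in \eqref{thmZa} are needed: $c^{\bf Z}$ has nonnegative entries with all row sums and all column sums at most $\gamma$, so Schur's test gives $\|c^{\bf Z}\|_{\ell^2\to\ell^2}\le\gamma<1$, whence $\sum_{n\ge0}(c^{\bf Z})^n=(I-c^{\bf Z})^{-1}$ converges with operator norm at most $(1-\gamma)^{-1}$. Since the contraction \eqref{Z-Dobrushin condition C2} acts at the level of $f=\sqrt{f^2}$ whereas the entropy is an $f^2$-quantity, it enters the estimate twice, which accounts for the exponent $2$ in $(1-\gamma)^{-2}$; this gives the displayed approximate tensorization and hence the theorem.

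The step I expect to be the real obstacle is making the ``peeling'' rigorous with a remainder that is uniformly controlled in $N$: one has to cope with the non-commutativity of the operators $P_j$ and with the fact that the law of a coordinate conditional on a \emph{prefix} of the others is merely a marginal of the full one-site conditional $\mu_j(\cdot|x^{\hat j})$ to which the uniform conditional log-Sobolev inequality applies. That some smallness condition such as $\gamma<1$ is genuinely necessary, and not an artefact of the method, is already clear from the two-dimensional Ising model at low temperature, whose one-site conditionals satisfy a logarithmic Sobolev inequality with a volume-independent constant even though the Gibbs measure itself does not.
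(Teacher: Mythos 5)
The paper does not prove this theorem: it is quoted as Zegarlinski's result \cite[Theorem 0.1]{Zegarlinski} and used as a black box (the authors' own work begins immediately after with ``Our objective is to estimate $c_{ij}^{\bf Z}$'' and Lemmas~\ref{thmZ-lem1}--\ref{thmZ-lem2}). So there is no in-paper argument to compare against; what follows assesses your sketch on its own terms.

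Your top-level reduction (approximate tensorization $\ent_\mu(f^2)\le(1-\gamma)^{-2}\sum_i\mu\bigl(\ent_{\mu_i}(f^2)\bigr)$ plus the conditional LSI) and the Schur-test remark that the row- and column-sum bounds in~\eqref{thmZa} give $\|c^{\bf Z}\|_{\ell^2\to\ell^2}\le\gamma$ are both correct and in the spirit of Zegarlinski's mechanism. The genuine gap is the asserted ``classical chain rule for the entropy along the order $1,\dots,N$'' with the operators $P_j=\mu_j(\cdot\,|\,x^{\hat j})$: no such one-pass telescoping exists. For a non-product $\mu$ the $P_j$ do not commute and $P_N\cdots P_1 g\neq\mu(g)$, so $\ent_\mu(f^2)$ is \emph{not} a finite sum over a single sweep of conditional entropies with respect to the full one-site conditionals $\mu_j$. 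The finite telescoping you describe is the martingale decomposition, but that one uses conditionals of $x_k$ given a \emph{suffix} $(x_{k+1},\dots,x_N)$, which are not the $\mu_k$ to which the hypothesis~\eqref{thm2-a1} and the contraction~\eqref{Z-Dobrushin condition C2} apply; you notice this tension in your last paragraph but treat it as a technical wrinkle rather than as the reason the one-pass chain rule fails. Zegarlinski's actual argument iterates the Gibbs-sampler sweep $T=P_N\cdots P_1$ infinitely: it first proves $T^nf^2\to\mu(f^2)$ (a separate consequence of $\gamma<1$ that your sketch omits), then telescopes $\ent_\mu(f^2)=\sum_{n\ge0}\bigl[\mu(T^nf^2\log T^nf^2)-\mu(T^{n+1}f^2\log T^{n+1}f^2)\bigr]$, using~\eqref{Z-Dobrushin condition C2} to propagate gradient control through each application of a $P_j$ and sum the resulting geometric series. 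Your write-up conflates the two decompositions and misses the convergence-of-the-sweep step, so as written the argument does not close.
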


Our objective is to estimate $c_{ij}^{\bf Z}$. We begin with a simple observation :

\blem\label{thmZ-lem1} If for any function $g=g(x_j)\in C_b^1(\rr^d)$ on the single particle $x_j$,
\beqq\label{thmZ-lem1a}
|\na_i \mu_j(g)|\le c_{ij} \mu_j(|\na g|),
\neqq
then $c_{ij}^{\bf Z}\le c_{ij}$.
\nlem

\bprf
For any $0<g\in C_b^1((\rr^d)^N)$, by the condition (\ref{thmZ-lem1a}), we have for all $i\ne j$,
$$\aligned
|\na_i \sqrt{\mu_j(g)}| &=\frac 1{2  \sqrt{\mu_j(g)}}[ |\mu_j(\na_i g) + (\na_{x_i} \int g(x_j, y^{\hat j})  {\rm d}\mu_j(x_j|x^{\hat j}))|_{y^{\hat j}=x^{\hat j}}|]\\
&\le \frac 1{2  \sqrt{\mu_j(g)}} \left[\mu_j(|\na_i g|) + c_{ij} \mu_j(|\na_j g|)\right].
\endaligned
$$
When $g=f^2$ with $f>0$, we have by the Cauchy-Schwarz inequality for all $i,j$,
$$\aligned
\mu_j(|\na_i g|)=2\mu_j(f |\na_i f|) \le 2\sqrt{\mu_j(f^2) \mu_j(|\na_i f|^2)}.
\endaligned
$$
Substituting it into the previous inequality we get
$$
|\na_i \sqrt{\mu_j(f^2)}| \le \sqrt{\mu_j(|\na_i f|^2)} + c_{ij}  \sqrt{\mu_j(|\na_j f|^2)}
$$
so it follows $c_{ij}^{\bf Z}\le c_{ij}$.
\nprf

\blem\label{thmZ-lem2} For the mean field Gibbs measure $\mu=\mu^{(N)}$, the interdependence coefficient $c_{ji}^{\bf Z}$ satisfies
$$
c_{ji}^{\bf Z} \le \frac 1{N-1} c_{\rm Lip, m}\|\na^2_{x,y}W\|_\infty, \ i\ne j
$$
where $c_{\rm Lip, m}$ is given by (\ref{thm1a}), $$\|\na^2_{x,y}W\|_\infty:=\sup_{x,y\in\rr^d}\sup_{z\in\rr^d, |z|=1}|\na^2_{x,y}W(x,y)z|.$$
\nlem

\bprf For any $z\in \rr^d$ with $|z|=1$ and $g=g(x_i)\in C_0^2(\rr^d)$,
\[\aligned
  \na_{x_j} \mu_i(g) &= \nabla_{x_j} \big(\int g(x_i) e^{-H(x_1,x_2,\cdots,x_N)} d x_i /\int e^{-H(x_1,x_2,\cdots,x_N)} d x_i \big)\\
   &= \frac{\int g(x_i)(-\nabla_{x_j}H)e^{-H}dx_i}{\int e^{-H}dx_i} + \frac{\int g(x_i)e^{-H}dx_i \int \nabla_{x_j}H e^{-H}dx_i}{(\int e^{-H}dx_i)^2}  \\
   &=  -\int g(x_i)\nabla_{x_j}H d \mu_i + \int g(x_i)d\mu_i \int \nabla_{x_j}H d\mu_i\\
   &= {\rm Cov}_{\mu_i}(g, -\nabla_{x_j}H)={\rm Cov}_{\mu_i}(g, -\frac 1{N-1} (\na_{y} W)(x_i,x_j))
\endaligned
\]
and so
$$\aligned
z\cdot\na_{x_j} \mu_i(g)&={\rm Cov}_{\mu_i}(g, -\frac 1{N-1} (\na_{y} W)(x_i,x_j)\cdot z)\\
&=-\frac 1{N-1} \<(-\LL_i)g, (-\LL_i)^{-1}((\na_{y} W)(\cdot,x_j)\cdot z -\mu_i((\na_{y} W)(\cdot,x_j)\cdot z) \>_{\mu_i}\\
&= -\frac 1{N-1} \int \na_i g \cdot \na_i (-\LL_i)^{-1}[(\na_{y} W)(\cdot,x_j)\cdot z-\mu_i((\na_{y} W)(\cdot,x_j)\cdot z)] d\mu_i.
\endaligned
$$
By Lemma \ref{lem-Lip},
$$
\aligned
&\|\na_i (-\LL_i)^{-1}((\na_{y} W)(\cdot,x_j)\cdot z-\mu_i((\na_{y} W)(\cdot,x_j)\cdot z))\|_{L^\infty(\mu_i)}\\
&\le c_{\rm Lip,m} \sup_{x_i,x_j} |\na_{x_i} ((\na_{y} W)(x_i,x_j)\cdot z)|\\
&= c_{\rm Lip,m} \sup_{x,y\in\rr^d}|\na^2_{x,y}W(x,y)z|\\
&\le c_{\rm Lip,m} \|\na^2_{x,y}W\|_\infty.
 \endaligned$$
Plugging it into the previous inequality, we obtain
$$
|\na_{x_j} \mu_i(g)|=\sup_{|z|=1} |z\cdot\na_{x_j} \mu_i(g)|\le \frac 1{N-1}c_{\rm Lip,m} \|\na^2_{x,y}W\|_\infty  |\mu_i(\nabla_{x_i}g)|
$$
which, by Lemma \ref{thmZ-lem1}, completes the proof.
\nprf

\bprf[Proof of Theorem \ref{thm2}]  By Lemma \ref{thmZ-lem2},
$$
\gamma=\sup_{1\leq i\leq N}\max\big\{\sum_{1\leq j\leq N} c_{ji}^{\bf Z}, \sum_{1\leq j\leq N} c_{ij}^{\bf Z}\big\} \le c_{\rm Lip,m} \|\na^2_{x,y}W\|_\infty=\gamma_0<1.
$$
Then Theorem \ref{thm2} follows directly from Theorem \ref{thmZ}.
\nprf

\section{Exponential convergence of McKean-Vlasov equation}

Assume that $\mu^{(N)}$ satisfies a uniform log-Sobolev inequality with constant
$$\dsp \rho_{LS}=\limsup_{N\to \infty}\rho_{LS}(\mu^{(N)})>0.$$ That is the case if $c_{\rm Lip,m} \|\na^2_{x,y}W\|_\infty<1$ by Theorem \ref{thm2}, more precisely
\[
\rho_{LS}\ge \rho_{LS,m}(1-c_{\rm Lip,m} \|\na^2_{x,y}W\|_\infty )^2.
\]

\subsection{Free energy, entropy related to the McKean-Vlasov equation}

The entropy $H_W(\nu)$ can be identified as the mean relative entropy per particle of $\nu^{\otimes N}$ w.r.t. the mean field Gibbs measure $\mu^{(N)}$:

\blem\label{thmHW-lem1} For any probability measure $\nu$ on $\rr^d$ such that $H(\nu|\alpha)<+\infty$,
\beqq\label{thmHW-lem1a}
\frac 1N H(\nu^{\otimes N}|\mu^{(N)})\to H_W(\nu).
\neqq
\nlem

\bprf Recall that $\alpha = \frac 1C e^{-V} dx$. By the assumption (H1), it is known that (\cite{CGW10})
\beqq\label{SJTU2}
\int e^{\lambda_0 |x|^2} d\alpha(x)<+\infty \text{ for some }\lambda_0>0.
\neqq
Let
$$
\tilde Z_N := \int \exp\left(-  \frac 1{2(N-1)} \sum_{i\ne j} W(x_i,x_j)\right) d\alpha^{\otimes N}
$$
so that
$$d \mu^{(N)}=
\frac 1{\tilde Z_N} \exp\left(-  \frac 1{2(N-1)} \sum_{i\ne j} W(x_i,x_j)\right) d\alpha^{\otimes N}.
$$
Let $\nu\in\MM_1(\rr^d)$ such that $H(\nu|\alpha)<+\infty$. Since $H(\nu^{\otimes 2}|\alpha^{\otimes 2})=2 H(\nu|\alpha)<+\infty$, by Donsker-Varadhan's variational formula of entropy,  (\ref{SJTU2}) and the fact that
$|W(x,y)|\le C(1+|x|^2+|y|^2)$ (for $\nabla^2 W$ is bounded),  we have $W\in L^1(\nu^{\otimes 2})$. Therefore

$$\aligned
\frac 1N H(\nu^{\otimes N}|\mu^{(N)})&=\frac 1N\int \log \frac{d \nu^{\otimes N}}{d\mu^{(N)}} d\nu^{\otimes N}\\
&=\frac 1N \int \sum_{i=1}^N \log \frac {d\nu}{d\alpha} (x_i) d\nu^{\otimes N} + \int \frac 1{2N(N-1)} \sum_{i\ne j} W(x_i,x_j)  d\nu^{\otimes N} +\frac 1N \log\tilde Z_N\\
&= H(\nu|\alpha) + \frac 12 \iint W(x,y) d\nu(x) d\nu(y)+\frac 1N \log \tilde Z_N
\endaligned$$
By \cite[(3.30)]{LW18},

$$
\lim_{N\to\infty}\frac 1N \log \tilde Z_N = -\inf_{\nu} E_{f}(\nu).
$$
Combining those two equalities we obtain (\ref{thmHW-lem1a}).
\nprf

The following super-additivity of the relative entropy w.r.t. a product probability measure should be known.

\blem\label{thmHW-lem2}  Let $\prod_{i=1}^N \alpha_i, Q$ be respectively a product probability measure and a probability measure on $E_1\times\cdots\times E_N$ where $E_i$'s are Polish spaces, and $Q^i$ the marginal distribution of $x_i$ under $Q$. Then
$$
H(Q| \prod_{i=1}^N \alpha_i)\ge \sum_{i=1}^N H(Q^i|\alpha_i).
$$
\nlem

\bprf Let $Q_i(\cdot|x_{[1,i-1]})$ be the conditional distribution of $x_i$ knowing $x_{[1,i-1]}=(x_1,\cdots,i-1)$ (knowing nothing if $i=1$). We have
$$\aligned
H(Q| \prod_{i=1}^N \alpha_i)&=\ee^Q \log \frac{dQ}{d \prod_{i=1}^N \alpha_i}=\ee^Q\sum_{i=1}^N \log \frac{Q_i(dx_i|x_{[1,i-1]})}{\alpha_i(dx_i)}\\
&=\ee^{Q} \sum_{i=1}^n H(Q_i(\cdot|x_{[1,i-1]})|\alpha_i).
\endaligned$$
Since $\ee^Q Q_i(\cdot|x_{[1,i-1]})=Q^i(\cdot)$, we obtain by the convexity of the relative entropy
$$
\ee^{Q} H(Q_i(\cdot|x_{[1,i-1]})|\alpha_i)\ge H(Q^i|\alpha_i)
$$
where the desired super-additivity follows.
\nprf

\blem\label{thmHW-lem3}  Let $\mu$ be a probability measure on some Polish space $S$ and $U : S\to (-\infty, +\infty]$ a measurable potential satisfying
$$
\int e^{-p U} d\mu <+\infty
$$
for some $p>1$. Consider the Boltzmann probability measure $\mu_U=e^{-U} d\mu/C$. If $H(\nu|\mu_U)<+\infty$, then $H(\nu|\mu)<+\infty$ and $U\in L^1(\nu)$, and
$$
H(\nu|\mu_U)=H(\nu|\mu) + \int U d\nu - \log \int e^{-U} d\mu.
$$
\nlem

\bprf For any measurable function $f$ on $S$, let
$$
\Lambda_\mu(f):=\log \int e^f d\mu\in (-\infty,+\infty]
$$
be the log-Laplace transform w.r.t. $\mu$, which is convex in $f$ (by H\"older's inequality). Then
$$
\Lambda_{\mu_U}(f) =\log \int e^f d\mu_U=\Lambda_\mu(-U+f) -\Lambda_\mu(-U) \le \frac 1p \Lambda_{\mu}(-pU) + \frac 1q \Lambda_\mu(qf) -\Lambda_\mu(-U)
$$
where $q=p/(p-1)$. By Donsker-Varadhan's variational formula,
$$\aligned
H(\nu|\mu_U) &=\sup_{f\in b\BB} \left(\nu(f)-\Lambda_{\mu_U}(f)\right)\\
&\ge \sup_{f\in b\BB} \left(\nu(f)- \frac 1q \Lambda_\mu(qf)\right) +\Lambda_\mu(-U)- \frac 1p \Lambda_\mu(-pU) \\
&=\frac 1q H(\nu|\mu) +\Lambda_\mu(-U)- \frac 1p \Lambda_\mu(-pU).
\endaligned
$$
Hence if $H(\nu|\mu_U)<+\infty$, $H(\nu|\mu)<+\infty$ or equivalently $\log \frac {d\nu}{d\mu} \in L^1(\nu)$,
and
$\log \frac {d\nu}{d\mu_U}= \log \frac {d\nu}{d\mu} + U + \Lambda_\mu(-U)\in L^1(\nu)$. This completes the proof of the Lemma.
\nprf

\blem\label{thmHW-lem4}  {\bf (propagation of chaos)} Let $(\nu_t)_{t\ge0}$ be the solution of the McKean-Vlasov equation with the given initial distribution $\nu_0$ such that
$\int |x|^2 d\nu_0(x)<+\infty$. Let $\mu_t^N$ be the law of $X^N(t)=(X^N_1(t),\cdots, X^N_N(t))$ solving the S.D.E. \eqref{MFS} with initial condition $\mu_0^N=\nu_0^{\otimes N}$, and $\mu_t^{N,I}$ the law of
the particles $(X^N_i(t))_{i\in I}$ for any index set $I\subset \nn^*$. Then for each $t\in \rr$ and each finite subset $I$ of $\nn^*$, $\mu_t^{N,I}\to \nu_t^{\otimes I}$ in the $L^2$-Wasserstein metric $W_2$ as $N\to\infty$.
\nlem

This is well known, see \cite{S91} or \cite{CGM08}.

\blem\label{thmHW-lem5}  {\bf (uniqueness of the minimizer of $H_W$)} If $c_{Lip,m} \|\nabla^2_{xy}W\|_\infty<1$, then the minimizer $\nu_\infty$ of the free energy $E_f(\nu)$ is unique.
\nlem

\bprf By \cite{LW18}, under (H2), if $H(\nu|\alpha)<+\infty$, $\iint W^-(x,y) d\nu(x)d\nu(y)<+\infty$ and $E_f: \MM_1(\rr^d)\to \rr$ is inf-compact. Then a minimizer $\nu_\infty$ of $E_f$ exists.

If a probability measure $\nu$ is a minimizer of $E_f$, $H(\nu|\alpha)<+\infty$, and then $\int |x|^2 d\nu<+\infty$ by (H1). Regarding the Gateaux-derivative, we see that $\nu$ must be a fixed point of the mapping $\Phi$ defined by
$$ \Phi(\nu):=\frac 1{Z'}\exp(-V -W\circledast \nu) dx
$$
where $Z'$ is the normalizing constant. Here $W\circledast \nu$ is well defined because $|W(x,y)|\le C(1+|x|^2+|y|^2)$ by the boundedness of the second derivatives of $W$.

We claim that $\Phi: \MM_1^2(\rr^d)\to \MM_1^2(\rr^d)$. Indeed, since the hamiltonian $H_\nu=V+W\circledast \nu$ (for any $\nu\in \MM_1^2(\rr^d)$) satisfies again the dissipative rate condition
$$
-\<\frac {x-y}{|x-y|}, \nabla H_\nu(x)-\nabla H_\nu(y)\>\le b_0(|x-y|), \ x,y\in\rr^d
$$
(as in \S 3), the associated generator $\LL_\nu=\Delta -\nabla H_\nu\cdot\nabla$ satisfies the Lipschitzian spectral gap estimate (\ref{lem-Lipa}) by Lemma \ref{lem-Lip}. That implies the spectral gap of $\nu'=\Phi(\nu)$, in particular $\int e^{\delta |x|}d\nu'<+\infty$ for some $\delta>0$ (\cite{BobkovLedoux-PTRF97}). Then
if $\nu\in \MM_1^2(\rr^d)$, $\Phi(\nu)\in \MM_1^2(\rr^d)$.

Now for the uniqueness of the minimizer of $E_f$, it remains to show that $\Phi$ is contractive on $(\MM_1^2(\rr^d), W_1)$. Let $\mu_k=\Phi(\nu_k), k=0,1$, and
$$
\nu_t:=(1-t)\nu_0 + t\nu_1,\ \mu_t=\Phi(\nu_t).
$$
For any $1$-Lipschitzian function $f$, we have
$$
\aligned
\frac d{dt} \mu_t(f)&= {\rm Cov}_{\mu_t}(f, -\partial_t (W\circledast \nu_t))\\
&={\rm Cov}_{\mu_t}(f, -W\circledast (\nu_1-\nu_0))
\endaligned
$$
and
$$
|\nabla_x[W\circledast (\nu_1-\nu_0)]| = |(\nabla_x W)\circledast (\nu_1-\nu_0)|\le \|\nabla^2_{yx}W\|_\infty W_1(\nu_0,\nu_1).
$$
Therefore using the Lipschitzian spectral gap estimate (\ref{lem-Lipa}) in Lemma \ref{lem-Lip} for the generator $\LL_{\nu_t}$,
$$\aligned
{\rm Cov}_{\mu_t}(f, -W\circledast (\nu_1-\nu_0))&=\<(-\LL_{\nu_t})^{-1}f, \LL_{\nu_t} W\circledast (\nu_1-\nu_0)\>_{\mu_t}\\
&=\int \<\nabla (-\LL_{\nu_t})^{-1}f, \nabla W\circledast (\nu_1-\nu_0)\> d\mu_t\\
&\le c_{Lip,m}  \|\nabla^2_{xy}W\|_\infty W_1(\nu_0,\nu_1)
\endaligned
$$
Thus we have
$$
\mu_1(f)- \mu_0(f)=\int_0^1 \frac d{dt}\mu_t(f) dt \le  c_{Lip,m}  \|\nabla^2_{xy}W\|_\infty W_1(\nu_0,\nu_1).$$
This means that $W_1(\Phi(\nu_0),\Phi(\nu_1))\le  c_{Lip,m}  \|\nabla^2_{xy}W\|_\infty W_1(\nu_0,\nu_1)$ by Kantorovitch-Rubinstein's duality relation. The proof is so completed.
\nprf

\brmk{\rm Though $(M_1^2(\rr^d), W_1)$ is not complete, the Banach's fixed point theorem works for the essential: let $\nu_\infty$ be the unique minimizer of $E_f$, then for any $\nu\in M_1^2(\rr^d)$,
$$
W_1(\Phi^n(\nu), \nu_\infty)\le [ c_{Lip,m}  \|\nabla_{xy}W\|_\infty]^n \cdot W_1(\nu,\nu_\infty), n\ge0.
$$

}\nrmk

As for the mean field relative entropy, the Fisher-Donsker-Varadhan's information $I_W(\nu)$ can be also interpreted as the mean Fisher-Donsker-Varadhan's information per particle.

\blem\label{thmHW-lem6}  {\bf (convergence of the Fisher information)} If $I(\nu|\alpha)<+\infty$,
\beqq
\frac 1N I(\nu^{\otimes N}|\mu^{(N)}) \to I_{W}(\nu).
\neqq

\nlem

\bprf For every probability measure $\nu$ on $\rr^d$ such that $I(\nu|\alpha)<+\infty$,  by the Lyapunov function condition (H1) on $V$ (\cite{GLWY}),
$$
c_1\int |x|^2 d\nu\le c_2 + I(\nu|\alpha)<+\infty.
$$
As $W$ has bounded second order derivatives, $\nabla_x W$ is of linear growth. Then $\nabla_x W\in L^2(\nu^{\otimes 2})$.
By the law of large number for i.i.d. sequence, we have
$$\aligned
\frac 1N I(\nu^{\otimes N}|\mu^{(N)})&= \frac 1{4N}  \int |\nabla \log \frac{d\nu^{\otimes N}}{d\mu^{(N)}}|^2 d\nu^{\otimes N}\\
&=\frac 1{4N} \int \sum_{i=1}^N |\nabla_{x_i} \log \frac{d\nu^{\otimes N}}{d\alpha^{\otimes N}} + \frac 1{N-1}\sum_{j\ne i}\nabla_x W(x_i,x_j) |^2  d\nu^{\otimes N}\\
&=\int \frac14 |\nabla \log \frac{d\nu}{d\alpha}(x_1) + \frac 1{N-1}\sum_{j=2}^N \nabla_x W(x_1, x_j) |^2  d\nu^{\otimes N}\\
&\to \frac14\int |\nabla \log \frac{d\nu}{d\alpha}(x_1) + \int \nabla_x W(x_1, y) d\nu(y)|^2 d\nu(x_1) =I_W(\nu).
\endaligned$$

\nprf

\subsection{Proof of Theorem \ref{thmHW}}

{\bf (1).}
At first the minimizer $\nu_\infty$ of $H_W$ is unique by Lemma ~\ref{thmHW-lem5}.

\bigskip
{\bf (2).} We may assume that $I(\nu|\alpha)<+\infty$, otherwise (\ref{H_WI_W}) is trivial for $I_W(\nu)=+\infty$. Since the Hessian $\nabla^2 V$ is lower bounded, and $V$ satisfies the Lyapunov function condition (\ref{SHJT1}), by Cattiaux-Guillin-Wu \cite{CGW10}, $\alpha$ satisfies a log-Sobolev inequality.
Then $H(\nu|\alpha)<+\infty$. By the log-Sobolev inequality of $\mu^{(N)}$ in Theorem \ref{thm2},
$$
\rho_{LS}(\mu^{(N)}) H(\nu^{\otimes N}|\mu^{(N)})\le 2 I(\nu^{\otimes N}|\mu^{(N)})
$$
and $\rho_{LS}(\mu^{(N)})\ge \rho_{LS,m}/(1-\gamma_0)^2>0$. Dividing the two sides by $N$ and letting $N$ go to infinity, we get by Lemma \ref{thmHW-lem1} and
Lemma \ref{thmHW-lem6},
$$
\rho_{LS} H_W(\nu)\le 2I_W(\nu).
$$

\bigskip
{\bf (3).} By Otto-Villani \cite{OV00} or Bobkov-Gentil-Ledoux \cite{BGL01}, the log-Sobolev inequality implies the Talagrand's $T_2$ transportation inequality, i.e.
$$
\rho_{LS}(\mu^{(N)}) W_2^2(Q,\mu^{(N)})\le 2H(Q|\mu^{(N)}), \ Q\in\MM_1((\rr^d)^N).
$$
Applying it to $Q=\nu^{\otimes N}$ with $H(\nu|\alpha)<+\infty$, we obtain
$$
\rho_{LS}(\mu^{(N)}) \frac 1N W_2^2(\nu^{\otimes N},\mu^{(N)})\le \frac 1N H(\nu^{\otimes N}|\mu^{(N)}).
$$
Notice that
$$
W_2^2(\nu^{\otimes N},\mu^{(N)})\ge \sum_{i=1}^N W_2^2(\nu, \mu^{(N,i)})=N W_2^2(\nu, \mu^{(N,1)})
$$
where $\mu^{(N,i)}$ is the marginal distribution of $x_i$ under $\mu^{(N)}$, which are all the same by the symmetry of $\mu^{(N)}$. Moreover by the uniqueness of $\nu_\infty$ and the large deviation principle of $\frac 1N \sum_{i=1}^N \delta_{x_i}$ under $\mu^{(N)}$ (\cite{LW18}), for any $f\in C_b(\rr^d)$,
$$
\mu^{(N,1)}(f)=\int \frac 1N \sum_{i=1}^N  f(x_i) d\mu^{(N)}\to \nu_\infty(f),
$$
i.e. $\mu^{(N,1)}$
converges weakly to $\nu_\infty$ . We obtain
by Lemma \ref{thmHW-lem1} and the lower semi-continuity of $W_2$,
$$
\rho_{LS} W_2^2(\nu, \nu_\infty) \le \rho_{LS} \liminf_{N\to\infty}W_2^2(\nu, \mu^{(N,1)})\le 2H_W(\nu)
$$
the desired Talagrand's type $T_2$-inequality for McKean-Vlasov equation.

\bigskip
{\bf (4).} The exponential convergence in entropy (\ref{thmHWa}) should be equivalent to the mean field log-Sobolev inequality (\ref{H_WI_W}) in part (2), basing on
\beqq\label{WHan1}
-\frac d{dt} H_W(\nu_t)=4 I_W(\nu_t)
\neqq
noted by Carrillo-McCann-Villani \cite{CMV03} in their convex framework.  The proof of (\ref{WHan1}) demands the regularity of $\nu_t$ which requires
the PDE theory of the McKean-Vlasov equation. That is why we prefer to give a rigorous probabilistic proof based directly on the log-Sobolev inequality of $\mu^{(N)}$ in Theorem \ref{thm2}.

For the exponential convergence (\ref{thmHWa}), we may and will assume that
$H_W(\nu_0)<+\infty$ and we fix the time $t>0$. By Lemma \ref{thmHW-lem1},
$$
\lim_{N\to\infty}\frac 1N H(\nu_0^{\otimes N}|\mu^{(N)})=H_W(\nu_0).
$$
Moreover by the equivalence between the log-Sobolev inequality for $\mu^{(N)}$ and the exponential convergence in entropy of the law $\mu_t^N$ of $X^N_t=(X^{N,i}_t)_{1\le i\le N}$ to $\mu^{(N)}$,
\beqq\label{Whan2}
\aligned
\frac 1N H(\mu_t^N|\mu^{(N)})&\le e^{-\rho_{LS}(\mu^{(N)}) t/2}\frac 1N H(\mu_0^{N}|\mu^{(N)})\\
&= e^{-\rho_{LS}(\mu^{(N)}) t/2}\frac 1N H(\nu_0^{\otimes N}|\mu^{(N)})<+\infty.
\endaligned\neqq
Therefore $H(\mu_t^N|\alpha^{\otimes N})<+\infty$  by Lemma \ref{thmHW-lem3}. Since $\mu_t^N$ has finite second moment (easy from the SDE theory), and $W$ has at most quadratic growth,
$$
W(x_i,x_j)\in L^1(\mu_t^N).
$$
From Lemma \ref{thmHW-lem2}, we have
$$
\frac 1N H(\mu_t^N|\alpha^{\otimes N}) \ge H(\mu_t^{N,1}|\alpha).
$$
And by the propagation of chaos (Lemma \ref{thmHW-lem4}) and the lower semi-continuity of the relative entropy $\nu\to H(\nu|\alpha)$,
$\liminf_{N\to\infty} H(\mu_t^{N,1}|\alpha)\ge H(\nu_t|\alpha)$.

 So we get
$$\aligned
\liminf_{N\to\infty}\frac 1N H(\mu_t^N|\mu^{(N)})&=\liminf_{N\to\infty}\left(\frac 1N H(\mu_t^N|\alpha^{\otimes N}) + \int \frac 1{N(N-1)}\sum_{1\le i<j\le N}W(x_i,x_j) d\mu_t^N + \frac 1N \log \tilde Z_N\right)\\
&\ge H(\nu_t|\alpha) + \liminf_{N\to\infty}\frac 12 \int W(x_1,x_2) d\mu_t^N - \inf_{\nu\in\MM_1(\rr^d)} E_f(\nu)\\
&=H(\nu_t|\alpha) + \frac 12 \iint W(x_1,x_2) d\nu_t(x_1) d\nu_t(x_2) - \inf_{\nu\in\MM_1(\rr^d)} E_f(\nu)\\
&=H_W(\nu_t)
\endaligned
$$
by the $W_2$-propagation of chaos in Lemma \ref{thmHW-lem4}. Plugging it into (\ref{Whan2}), we obtain the exponential convergence in entropy (\ref{thmHWa}). That implies the $W_2$-exponential convergence  (\ref{thmHWbb}) by Talagrand's type $T_2$-inequality (\ref{T2}). \hfill$\Box$

\bigskip
{\bf Acknowledgements: } W. Liu is supported by the NSFC 11731009. Part of these results were first presented in the ``Workshop on stability of functional inequalities and applications'' in 2018 in Toulouse which is supported by the Labex CIMI and the ANR project ``Entropies, Flots, Inegalites”.

%\nocite{*}

\bibliographystyle{plain}
\bibliography{references}

\end{document}